\newcommand{\showcomments}{yes}
\renewcommand{\showcomments}{no}
\newsavebox{\commentbox}
\newenvironment{com}%
{\ifthenelse{\equal{\showcomments}{yes}}%
{\footnotemark
        \begin{lrbox}{\commentbox}
        \begin{minipage}[t]{1.25in}\raggedright\sffamily\tiny
        \footnotemark[\arabic{footnote}]}
{\begin{lrbox}{\commentbox}}}%
{\ifthenelse{\equal{\showcomments}{yes}}%
{\end{minipage}\end{lrbox}\marginpar{\usebox{\commentbox}}}
{\end{lrbox}}}
\newcommand{\Z}{\mathbb Z}
\newcommand{\E}{\mathbb E}
\newcommand{\h}{\mathbcal h}
\DeclareMathOperator{\Hull}{Hull}
\DeclareMathOperator{\Min}{Min}
\DeclareMathOperator{\sk}{sk}
\DeclareMathOperator{\canc}{canc}
\newcommand{\dist}{\textup{\textsf{d}}}
\theoremstyle{definition}
\newtheorem{thm}{Theorem}[section]
\newtheorem{lem}[thm]{Lemma}
\newtheorem{prop}[thm]{Proposition}
\newtheorem{remark}[thm]{Remark}
\newtheorem{defn}[thm]{Definition}
\author{Kasia Jankiewicz}
\address{Department of Mathematics, University of Chicago, Chicago, Illinois, 60637}
\email{kasia@math.uchicago.edu}
\title{Lower bounds on cubical dimension of $C'(1/6)$ groups}
\begin{document}
\begin{com}
{\bf \normalsize COMMENTS\\}
ARE\\
SHOWING!\\
\end{com}
\begin{abstract}For each $n$ we construct examples of finitely presented $C'(1/6)$ small cancellation groups that do not act properly on any $n$-dimensional CAT(0) cube complex.
\end{abstract}

\maketitle

\section{Introduction}
Groups that satisfy the $C'(1/6)$ small cancellation condition were shown 
to act properly and cocompactly on CAT(0) cube complexes in \cite{WiseSmallCanCube04}. 
In this note we are interested in the minimal dimension of a CAT(0) cube complex 
that such groups act properly  on. 
\begin{defn} The \emph{cubical dimension} of $G$ is the infimum of the values $n$ 
such that $G$ acts properly on an $n$-dimensional CAT(0) cube complex. 
\end{defn}

Wise's complex is obtained from Sageev's construction~\cite{Sageev95} 
with walls joining the opposite sides in each relator 
(after subdividing each edge into two if necessary). 
However, its dimension is not in general optimal. 
For example, the dimension of the CAT(0) cube complex 
associated to the usual presentation for the fundamental group of the surface of genus $g\geq 2$ is $g$, 
while its cubical dimension equals $2$ 
as it acts on the hyperbolic plane with a CAT(0) square complex structure.

We prove the following:
\begin{thm}\label{thm:main}
For each $n\geq 1$ and each $p\geq 6$ there exists a finitely presented $C'(1/p)$ small cancellation group $G$ such that the cubical dimension of $G$ is greater than $n$.
\end{thm}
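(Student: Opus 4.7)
The plan is, for each $n \ge 1$ and $p \ge 6$, to construct an explicit finitely presented group $G_{n,p}$ that satisfies $C'(1/p)$ and whose presentation harbors an intrinsic obstruction to being cubulated in dimension at most $n$. The construction will have a ``skeleton'' part whose purpose is to obstruct low-dimensional cubulations, and a ``padding'' part whose purpose is to push the piece-to-relator ratio below $1/p$.

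\textbf{Construction.} I would build $G_{n,p}$ via a presentation $\langle a_1,\dots,a_k \mid R_1,\dots,R_m\rangle$ in which each relator $R_j$ consists of a short ``essential'' subword carrying an $(n+1)$-fold combinatorial pattern, concatenated with a long generic padding word. A standard genericity argument then delivers $C'(1/p)$: the only substantial pieces come from overlaps between the essential subwords, and taking the paddings sufficiently long dilutes every piece to length below $|R_j|/p$. The essential subwords should be modeled on the structure of a known group whose cubical dimension exceeds $n$ (for instance a group built from an $(n+1)$-dimensional cube-complex pattern or an iterated amalgam), so that a specific finite set of elements $g_1,\dots,g_{n+1}\in G_{n,p}$ inherits a combinatorial ``crossing pattern'' from the model.

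\textbf{Lower bound.} Suppose $G_{n,p}$ acts properly on a CAT(0) cube complex $X$. Each $g_i$ has infinite order, hence acts with positive translation length and skewers some essential hyperplane $\mathfrak h_i$ of $X$. The cubical avatar of the combinatorial crossing pattern encoded in the relators should be that $\mathfrak h_1,\dots,\mathfrak h_{n+1}$ pairwise cross in $X$; once that is in place, the defining axiom of a CAT(0) cube complex produces an $(n+1)$-cube, giving $\dim X\ge n+1>n$.

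\textbf{Main obstacle.} Because $G_{n,p}$ is hyperbolic and torsion-free, the standard obstructions (a $\Z^{n+1}$ subgroup or a large finite subgroup) are unavailable, so one cannot simply read the dimension off from subgroup structure. The work is instead to argue that the combinatorial pattern built into the presentation propagates into \emph{every} proper cubulation of $G_{n,p}$, not just Wise's canonical one, so that the hyperplanes $\mathfrak h_i$ are forced to cross pairwise regardless of how one cubulates the group. Establishing this robustness---identifying an invariant of $G_{n,p}$ that every proper cubulation must see, and showing that it equals at least $n+1$---is the technical heart of the argument.
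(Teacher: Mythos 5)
Your proposal is a strategy outline, not a proof: you explicitly flag the key step---showing that an ``$(n+1)$-fold crossing pattern'' encoded in the presentation forces $n+1$ pairwise-crossing hyperplanes in \emph{every} proper cubulation---as the ``technical heart'' and then leave it open. That step is precisely where the content of the theorem lies, and nothing in the proposal suggests a mechanism for it. Indeed it is unclear how a word-combinatorial pattern in the relators could control hyperplane incidences in an arbitrary proper action, since the group acting has no direct say over which hyperplanes of an unknown cube complex cross which. So as written there is a genuine gap, not a variant proof.

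It is also worth noting that the paper's argument goes in an essentially orthogonal direction. Rather than trying to \emph{produce} a large cube, it sets up a trichotomy (Lemma~\ref{lem:main}) for any two hyperbolic isometries $x,y$ of an $n$-dimensional CAT(0) cube complex: either one of a uniformly bounded list of pairs of words in $x,y$ freely generates a free \emph{semigroup} (via ping-pong on halfspaces, Lemma~\ref{lem:ping pong} and Lemma~\ref{lem:all or nothing}), or a pair of conjugates stabilizes a hyperplane (dropping the effective dimension by one), or $\langle x^N,y^N\rangle$ is virtually abelian (via $\Hull(\Min^0)$ embedding into $\E^k$, Lemma~\ref{lem:convex hull} and Lemma~\ref{lem:euclidean}). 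The presentation is then rigged, via Proposition~\ref{prop:small cancellation presentation}, so that for every pair $(u,v)$ in the relevant finite list $\mathcal R_n(x,y)$ there is a relator which is a \emph{positive} word in $u,v$; this kills the free-semigroup alternative outright. Iterating the hyperplane-stabilizer alternative at most $n$ times forces the virtually abelian alternative on words of $*$-length at most $3^n$, which Greendlinger's Lemma rules out because the presentation is $C'(1/p')$ with $p'\geq 8\cdot 3^n$. In short, the paper never argues about crossings at all; it obstructs every alternative in a structured dichotomy. If you want to salvage your approach, you would need a theorem that translates relator combinatorics into forced hyperplane intersections uniformly over all proper cubulations, and no such tool is available or constructed here.
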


For $n=1$, the stronger form of Theorem~\ref{thm:main} was proved by Pride in \cite{Pride83}. 
He gives an explicit example of an infinite $C'(1/6)$ group with property $\text{FA}$. 
Pride's construction has been revisited in \cite{JankiewiczWise17}. 
We observe that the case $n=2$ can be deduced from the work of Kar and Sageev 
who study uniform exponential growth of groups acting freely on CAT(0) square complexes \cite{KarSageev16}. See Remark~\ref{rem:kar sageev}. 

The groups in our construction can be chosen to be \emph{uniformly} $C'(1/p)$, 
i.e.\ where the length of each piece is  less than $1/p$ of the minimum of the lengths of relators. 
The presentation complex of a uniform $C'(1/p)$ presentation can be ``folded'' to a complex that admits a CAT(-1) (so also CAT(0)) metric \cite{Gromov2004, Brown16} (see also \cite{Martin17} for the CAT(0) metric). In particular, if $n\geq 2$ the groups we construct have finite cubical dimension strictly greater than their CAT(0) and geometric dimensions which both equal $2$. Let us remind that the CAT(0) dimension of a group $G$ is the infimum dimension of a complete CAT(0) space with a proper $G$-action by semi-simple isometries. The groups we construct join the list of examples of groups with ``dimension gaps''. Brady-Crisp showed that certain Artin groups of geometric dimension $2$ have CAT(0) dimension~$3$ \cite{BradyCrisp02}. We note that by \cite{HuangJankiewiczPrzytycki16} none of these groups act properly and cocompactly on CAT(0) cube complexes, but it is unknown whether they admit proper non-cocompact actions on CAT(0) cube complexes. Bridson constructed a finitely presented torsion-free group of geometric dimension $2$ and of CAT(0) and cubical dimension $3$ which has an index $2$ subgroup of cubical dimension $2$ \cite{Bridson01}. Crisp gave examples of Bestvina-Brady kernels with CAT(0) dimension $3$ and geometric dimensions $2$ \cite{Crisp02}. The CAT(0) dimension of these groups does not drop as we pass to finite index subgroups.

The groups we construct can be also viewed as examples of hyperbolic groups ``non-acting'' on CAT(0) cube complexes (of given dimension). On the far side of the spectrum there are hyperbolic groups with property (T), 
such as uniform lattices in $Sp(n,1)$ and
random groups at density $d>1/3$ \cite{Zuk03}\cite{KotowskiKotowski13}.

This note is organized as follows. In Section~\ref{sec:ccc} we recall the classification of isometries of a CAT(0) cube complex with respect to hyperplanes. We refer to \cite{LS77} for the background on small cancellation theory. The very basic notions of small cancellation theory are also recalled in Section~3. In that section we also describe how to build a $C'(1/p)$ presentations where relators are positive products of given words. This technical result is applied in Section 4, which is the heart of the paper and contains the proof of Theorem~\ref{thm:main}. The argument heavily utilizes hyperplanes to create a dichotomy between free subsemigroups and subgroups having polynomial growth. The main ingredient of the proof of Theorem~\ref{thm:main} is Lemma~\ref{lem:main} which states that for any two hyperbolic isometries $a,b$ of an $n$-dimensional CAT(0) cube complex one of the following holds: $\langle a^N,b^N\rangle$ is virtually abelian for some $N=N(n)$, or there is a hyperplane stabilized by certain conjugates of some powers of $a$ or $b$, or there is a pair of words in $a,b$ of uniformly bounded length that generates a free semigroup.

\subsection*{Acknowledgements}I would like to thank my PhD advisors Piotr Przytycki and Daniel Wise. I would also like to thank Carolyn Abbott, Yen Duong, Teddy Einstein, Justin Lanier, Thomas Ng and Radhika Gupta for helpful discussion on \cite{KarSageev16}. Finally, I am grateful to the referees for all their comments and suggestions. The author was partially supported by (Polish) Narodowe Centrum Nauki, grant no.\ UMO-2015/18/M/ST1/00050.

\section{Isometries and hyperplanes in CAT(0) cube complexes}\label{sec:ccc}
In this section we recall relevant facts about isometries of CAT(0) cube complexes and collect some lemmas that will be used in the proof of Theorem~\ref{thm:main}. For general background on CAT(0) cube complexes and groups acting on them we refer the reader to \cite{Sageev14}.

Throughout the paper $X$ will be a finite dimensional CAT(0) cube complex. 
The set of all hyperplanes of $X$ is denoted by $\mathcal H (X)$.
We use letters $h,h^*$ to denote the halfspaces of a hyperplane $\h$, and $N(\h)$ to denote the closed carrier of $\h$, i.e.\ the convex subcomplex of $X$ that is the union of all the cubes intersecting $\h$. We say that a hyperplane $\h$ \emph{separates} subsets $A,B\subset X$, if $A\subset h$ and $B\subset h^*$.
The metric $\dist$ is the $\ell_1$-metric on $X$. All the paths we consider are combinatorial (i.e.\ concatenations of edges), all the geodesics are with respect to $\dist$, and all axes of hyperbolic isometries are combinatorial axes. The combinatorial translation length $\delta(x)$ of an isometry $x$ is defined as $\inf_{p\in X^0}\dist(p,xp)$. If $x$ acts without hyperplane inversions then the infimum is realized and $\delta(x^k) = k\delta(x)$ \cite{HaglundSemiSimple} (see also~\cite{Woodhouse16}). In particular, $x$ has an axis and any axis of $x$ is also an axis of $x^k$. The \emph{combinatorial minset} of $x$ is $$ \Min^0(x) = \{p\in X^0 : \dist(p, xp) = \delta(x)\}$$ 
where $X^0$ is the $0$-skeleton of $X$. Every $0$-cube $p$ of $\Min^0(x)$ lies on an axis of $x$ of the form $\bigcup_{i\in \mathbb Z}[x^{i}p,x^{i+1}p]$ where $[x^{i}p, x^{i+1}p]$ is any geodesic joining $x^ip$ and $x^{i+1}p$.
Let $n=\dim X$. Let $x$ be a hyperbolic isometry of $X$ and let $\h$ be a hyperplane. We recall the classification of isometries of a CAT(0) cube complex. More details can be found in \cite[Sec 2.4 and 4.2]{CapraceSageev2011}. 
\begin{itemize} 
\item $x$ \emph{skewers} $\h$ if $x^kh\subsetneq h$ for one of the halfspaces $h$ of $\h$ and some $k>0$. Equivalently, if some (equivalently, any) axis of $x$ intersects $\h$ exactly once.
\item $x$ is \emph{parallel} to $\h$ if some (equivalently, any) axis of $x$ is in a finite neighbourhood of $\h$. 
\item $x$ is \emph{peripheral} to $\h$ if $x$ does not skewer $\h$ and is not parallel to $\h$. Equivalently, $x^kh\subsetneq h^*$ for some $k>0$.
\end{itemize}

Note that the type of behaviour of $x$ with respect to $\h$ is commensurability invariant, i.e.\ $x^i$ has the same type as $x$ with respect to $\h$. The set of all hyperplanes in $X$ skewered by $x$ is denoted by $\sk(x)$.
The constant $k$ in the above definitions can be chosen to be at most $n$. Indeed, the $n+1$ hyperplanes $\{\h,x\h,\dots, x^n\h\}$ cannot all intersect in $X$ since $\dim X = n$. In particular, if $\h\in \sk(x)$ then $x^{n!}h\subset h$  for one of the halfspaces $h\in\h$ since  for an appropriate $k<n$ we have $x^kh\subset h$ and so $x^{n!}h\subset x^{(\frac{n!}{k}-1)k}h\subset \dots \subset x^kh\subset h$. Similarly, we have the following: 
\begin{lem}\label{lem:ramsey} There exists a constant $K_3 = K_3(n)$ such that for each hyperplane $\h$ in $X$ and an isometry $x$ there exist $k<k'\leq K_3$ such that the hyperplanes $\{\h, x^k\h, x^{k'}\h\}$ pairwise are disjoint or equal.
\end{lem}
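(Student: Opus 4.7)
The plan is to run a Ramsey-style argument on the orbit of $\h$ under the powers of $x$. Define an auxiliary graph $G$ on vertex set $\{0,1,\ldots,N\}$ (with $N$ to be chosen) by placing an edge between $i$ and $j$ whenever the hyperplanes $x^i\h$ and $x^j\h$ are distinct and cross transversely. The key geometric input is that $G$ has no clique of size $n+1$: such a clique would consist of $n+1$ pairwise distinct and pairwise intersecting hyperplanes, and by the Helly property for hyperplanes in a CAT(0) cube complex these would share a common cube; that cube would have dimension at least $n+1$, contradicting $\dim X = n$.

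Having bounded the clique number of $G$ by $n$, I would invoke Ramsey's theorem in the form: taking $N := R(n+1,3)-1$, every graph on $\{0,1,\ldots,N\}$ without an $(n+1)$-clique contains an independent set of size $3$. Thus there exist indices $i_0<i_1<i_2\le N$ for which the three hyperplanes $x^{i_0}\h, x^{i_1}\h, x^{i_2}\h$ are pairwise disjoint or equal. To get $\h$ itself into the triple I apply the isometry $x^{-i_0}$, which preserves the relation ``disjoint or equal'': the translated triple $\{\h,\,x^{i_1-i_0}\h,\,x^{i_2-i_0}\h\}$ is still pairwise disjoint or equal, so setting $k:=i_1-i_0$, $k':=i_2-i_0$, and $K_3(n):=R(n+1,3)-1$ completes the proof.

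There isn't a substantial obstacle. The only subtlety is ensuring that the independent set produced by Ramsey contains the index $0$, which is handled by the translation trick in the last step. Everything else is standard input: the Helly property of hyperplanes in finite dimensional CAT(0) cube complexes (used implicitly in the discussion preceding the lemma when bounding $k$ by $n$) and the classical Ramsey bound, which yields $K_3(n)$ of order at most quadratic in $n$.
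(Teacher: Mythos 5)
Your proof is essentially identical to the paper's: both define the auxiliary graph on indices with edges when the corresponding translates of $\h$ are distinct and crossing, bound the clique number by $n$ via dimension, apply Ramsey's theorem for $R(n+1,3)$ to extract a $3$-anticlique, and then translate by a power of $x$ so that the triple starts at $\h$ itself. The only cosmetic difference is the off-by-one in the definition of $K_3$ (you take $R(n+1,3)-1$, the paper takes $R(n+1,3)$ and looks at indices $[0, K_3-1]$), which does not affect correctness.
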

The constant in the lemma is the Ramsey number $R(n+1, 3)$. Recall that the Ramsey number  $R(k,l)$ is the least integer such that the complete graph $R(k,l)$ with all edges colored blue or red either contains a blue $k$-clique, or a red $l$-clique. For the existence of Ramsey number for any integers $k,n$ see e.g.\ \cite{GrahamRothschildSpencer80}.

\begin{proof} Consider the graph $\Gamma$ whose vertices correspond to integers, and two integers $r, q$ are joined by an edge if and only if $x^r\h$ and $x^q\h$ are distinct and intersect. Cliques in $\Gamma$ correspond to collections of distinct pairwise intersecting hyperplanes. Let $K_3$ be the Ramsey constant for numbers $(n+1)$ and $3$. Since $X$ is $n$-dimensional,  there are no $(n+1)$-cliques in $\Gamma$. The induced subgraph of $\Gamma$ on vertices $[0,K_3-1]$ must contain a $3$-anticlique. This corresponds to a triple of hyperplanes $\{x^p\h,x^q\h, x^r\h\}$ where $p<q<r$ that pairwise are disjoint or equal. Hence the hyperplanes $\{\h, x^{q-p}\h, x^{r-p}\h\}$ are pairwise disjoint or equal. \end{proof}

In the above Lemma the hyperplanes $\h, x^k\h, x^{k'}\h$ are pairwise disjoint, or $x^{K_3!}$ stabilizes $\h$ (and the two cases are not mutually exclusive).

\begin{lem}\cite[Lem 12]{KarSageev16}\label{lem:ping pong}
Suppose $x$ and $y$ are hyperbolic isometries of $X$ and there exists a hyperplane $\h = (h, h^*)$ such that $xh\subset h$, $yh\subset h$ and $xh\subset yh^*$.  Then $x,y$ freely generate a free semigroup. See Figure~\ref{fig:ping-pong triple}.
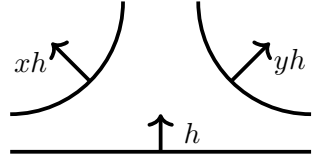
\begin{figure}\label{fig:ping-pong triple}
\begin{tikzpicture}
\draw[line width=0.5mm] (-2,0) to (2,0);
\draw[->, line width=0.5mm] (0,0) -- (0,0.5) node [midway, label=right:$h$] {};
\draw[line width=0.5mm] (-2,0.5) to[out=0, in=270] (-0.5,2);
\draw[line width=0.5mm] (2,0.5) to[out=180, in=270] (0.5,2);
\draw[->, line width=0.5mm] (-0.95,0.95) -- (-1.45, 1.45) node [midway, label=left:$xh$] {};
\draw[->, line width=0.5mm] (0.95,0.95) -- (1.45, 1.45) node [midway, label=right:$yh$] {};
\end{tikzpicture}
\caption{A ping-pong triple.}
\end{figure}
\end{lem}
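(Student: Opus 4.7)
The plan is to run a standard ping-pong argument for semigroups, using $A_x := xh$ and $A_y := yh$ as the two ping-pong regions inside the halfspace $h$.

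First I would verify the ping-pong setup. By hypothesis $A_x = xh \subset h$ and $A_y = yh \subset h$, and moreover $A_x \subset yh^*$; since $yh^*$ is the halfspace of $y\h$ complementary to $yh = A_y$, this forces $A_x \cap A_y = \emptyset$. Both $A_x$ and $A_y$ are nonempty (they are images of the nonempty halfspace $h$ under bijections). The crucial estimates follow immediately: applying $x$ or $y$ to the inclusion $A_x \cup A_y \subset h$ yields
\[
x(A_x \cup A_y) \subset xh = A_x, \qquad y(A_x \cup A_y) \subset yh = A_y.
\]

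Next I would upgrade these by induction on word length: for any positive word $w = z_1 z_2 \cdots z_k$ with letters $z_i \in \{x,y\}$, one has $w(A_x \cup A_y) \subset A_{z_1}$. The inductive step is just $z_1 \bigl((z_2 \cdots z_k)(A_x \cup A_y)\bigr) \subset z_1(A_x \cup A_y) \subset A_{z_1}$, using $A_{z_2} \subset A_x \cup A_y$.

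Finally, I would argue that two positive words $w = z_1 \cdots z_k$ and $w' = z_1' \cdots z_m'$ which represent the same group element must be identical. After stripping a maximal common prefix, one of two things happens. If the two remaining words still both begin with letters, those first letters differ, say $z_1 = x$ and $z_1' = y$; then $w$ maps the nonempty set $A_x \cup A_y$ into $A_x$ while $w'$ maps it into $A_y$, contradicting $A_x \cap A_y = \emptyset$. Otherwise one remaining word is empty while the other, say $w$, is a nonempty positive word equal to the identity; but then $A_x \cup A_y = w(A_x \cup A_y) \subset A_{z_1}$, contradicting the fact that $A_{z_1}$ is a proper subset of $A_x \cup A_y$ (the other of $A_x, A_y$ being nonempty and disjoint from $A_{z_1}$).

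There is really no substantive obstacle here; the only subtle point is making sure the ``one word is a proper prefix of the other'' case is handled, which is why I emphasize that both $A_x$ and $A_y$ are nonempty so that $A_{z_1} \subsetneq A_x \cup A_y$ strictly. The hypothesis $xh \subset yh^*$ is doing double duty: it secures the disjointness of $A_x$ and $A_y$, and combined with the hypotheses $xh,yh \subset h$ it forces both inclusions to be strict, which is what rules out the degenerate case.
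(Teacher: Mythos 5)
Your argument is correct and complete; note that the paper itself does not reprove this statement but simply cites \cite[Lem 12]{KarSageev16}, where the same standard ping-pong-for-semigroups argument is given. The only point worth emphasizing (which you handle correctly) is the degenerate case where one stripped word is empty: there you need both $A_x$ and $A_y$ nonempty and disjoint so that $A_x\cup A_y\not\subset A_{z_1}$, and the hypothesis $xh\subset yh^*$ together with $xh,yh\subset h$ is exactly what delivers this.
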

 The triple $\{h, xh, yh\}$ as in Lemma~\ref{lem:ping pong} is called a \emph{ping-pong triple}. The following Lemma is a higher dimensional version of the All-Or-Nothing Lemma \cite[Lem 13]{KarSageev16}. Our proof is based on the proof of Kar--Sageev but it differs slightly.
 
\begin{lem}\label{lem:all or nothing}
Let $x$ and $y$ be hyperbolic isometries and let 
$\h\in\sk(x)$. Then one of the following holds
\begin{itemize}
\item $y$ skewers all $x^{in!}\h$ for $i\in\Z$, or 
\item $y$ skewers none of $x^{in!}\h$ for $i\in\Z$, or 
\item one of the following pairs of words freely generate a free semigroup for some $1\leq k\leq n$: 
\begin{align}\tag{$\star$}
\begin{split}
(x^{n!}, y^{kn!}x^{n!}),\\
(x^{n!}, y^{-kn!}x^{n!}),\\
(x^{-n!}, y^{kn!}x^{-n!}),\\
(x^{-n!}, y^{-kn!}x^{-n!}).\\
\end{split}
\end{align}
\end{itemize}
\end{lem}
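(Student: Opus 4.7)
The plan is to assume both of the first two alternatives fail and produce the third via Lemma~\ref{lem:ping pong}. Write $z := x^{n!}$. Since $y$ skewers $z^i\h$ for some but not all $i\in\Z$, the skewering status changes between some consecutive pair $z^i\h, z^{i+1}\h$; after relabeling the skewered hyperplane of that pair as $\h$ and possibly replacing $x$ by $x^{-1}$ (this swap accounts for $x^{\pm n!}$ in the conclusion), we may assume $y$ skewers $\h$ but not $z\h$, and pick the halfspace $h$ with $zh\subsetneq h$.

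I first show that $y$ is peripheral to $z\h$ with its axis $\gamma$ contained in $zh^*$. Since $zh\subsetneq h$, the hyperplanes $\h$ and $z\h$ are disjoint (a crossing square would give a point of $zh\cap h^*$, contradicting $zh\subset h$). An axis $\gamma$ of $y$ crosses $\h$ exactly once, so one end of $\gamma$ escapes to infinity in $h^*\subset zh^*$; thus the distance from $\gamma$ to $z\h$ is unbounded, ruling out $y$ parallel to $z\h$ and forcing $y$ peripheral. Since $\gamma$ cannot cross $z\h$ yet has an end in $zh^*$, all of $\gamma$ lies in $zh^*$.

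Next, apply the dimension bound $\dim X = n$ to the $n+1$ hyperplanes $\{z\h, y^{-n!}z\h, \dots, y^{-nn!}z\h\}$. These are distinct: if $y^N z\h = z\h$ for some $N\ne 0$, then $y^N$ stabilizes a halfspace of $z\h$ (up to inversion), preventing any power of $y^N$ from pushing $zh$ properly into $zh^*$ and contradicting the commensurability-invariant peripherality of $y$. By dimension some pair is disjoint, yielding $1\leq k\leq n$ with $z\h$ and $y^{-kn!}z\h$ disjoint after translating by a power of $y$. Since $y^{-kn!}$ does not skewer $z\h$, the halfspaces nest as either $y^{-kn!}zh\subset zh^*$ or $y^{-kn!}zh\supset zh^*$; the latter gives $y^{-kn!}zh^*\subset zh$, and then $\gamma = y^{-kn!}\gamma \subset y^{-kn!}zh^*\subset zh$ contradicts $\gamma\subset zh^*$. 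Thus $y^{-kn!}zh\subset zh^*$, i.e., $zh\subset y^{kn!}zh^*$. The same dimension argument for $y$ acting on $\h$ gives $1\leq k_1\leq n$ with $y^{k_1}h\subsetneq h$ or $y^{k_1}h^*\subsetneq h^*$; replacing $y$ by $y^{-1}$ if necessary (this toggles the sign of $kn!$ in the pair) we may assume $y^{k_1}h\subsetneq h$, and since $k_1\mid n!$ we get $y^{kn!}h\subsetneq h$ by iteration.

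The three hypotheses of Lemma~\ref{lem:ping pong} now hold for the pair $(x^{n!}, y^{kn!}x^{n!})$ and halfspace $h$: $x^{n!}h = zh\subset h$; $y^{kn!}x^{n!}h = y^{kn!}zh \subset y^{kn!}h \subset h$; and $x^{n!}h = zh \subset y^{kn!}zh^* = y^{kn!}x^{n!}h^*$. So this pair freely generates a free semigroup, and the three remaining sign combinations (from $x\leftrightarrow x^{-1}$ in the reduction or $y\leftrightarrow y^{-1}$ before the last step) give the other three listed pairs via identical arguments. The main technical difficulty is in the third paragraph: we need the Ramsey-style dimension argument to deliver the nesting in the specific direction $y^{-kn!}zh\subset zh^*$ so that the third ping-pong inequality is satisfied, and this is precisely what the axis-in-$zh^*$ analysis of the second paragraph is designed to guarantee.
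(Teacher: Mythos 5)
Your proof is correct and follows essentially the same approach as the paper: reduce to the case where $y$ skewers $\h$ but not $x^{n!}\h$, establish that $y$ is peripheral to $x^{n!}\h$ with axis confined to $x^{n!}h^*$, and then construct a ping-pong triple. You are more explicit than the paper at a few points that it passes over quickly — in particular the dimension/Ramsey step giving $k\leq n$, the elimination of the three wrong nesting cases to secure the third ping-pong inequality, and the consolidation of the paper's two subcases into one via the $x\leftrightarrow x^{-1}$ swap — but these are elaborations of the same argument rather than a different route.
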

\begin{proof}
Let $h$ be the halfspace of $\h$ such that $x^{n!}h\subsetneq h$. Suppose that $y$ skewers some hyperplane in $\{x^{in!}\h\}_{i\in\Z}$ but not all of them. Without loss of generality we can assume that $y$ skewers exactly one of $\h,x^{n!}\h$. First suppose $y$ skewers $\h$ but not $x^{n!}\h$  i.e.\ the axis $\gamma_y\subset x^{n!}h^*$. Since $\gamma_y$ goes arbitrarily deep in $h^*$ we have that $y$ is peripheral to $x^{n!}\h$. We either have $y^{n!}h\subset h$ or $y^{-n!}h\subset h$. 
Let $k$ be such that $y^{kn!}x^{n!}\h$ and $x^{n!}\h$ are disjoint. 
Either $y^{kn!}x^{n!}h\subset y^{kn!}h\subset h$ or $y^{-kn!}x^{n!}h\subset y^{-kn!}h\subset h$ and thus $\{h, x^{n!}h, y^{kn!}x^{n!}h\}$ or $\{h, x^{n!}h, y^{-kn!}x^{n!}h\}$ is a ping-pong triple.
Similarly, if $y$ skewers $x^{n!}\h$ but not $\h$, then one of $\{x^{n!}h^*, h^*, y^{kn!}h^*\}$ or $\{x^{n!}h^*, h^*, y^{-kn!}h^*\}$ is a ping-pong triple.
\end{proof}

The \emph{combinatorial convex hull} $\Hull(A)$ of a subset $A\subset X^0$ is the intersection of all convex subcomplexes containing $A$, i.e.\ $\Hull(A)$ is the maximal subcomplex contained in $\bigcap\{h:A\subset h\}$. 
Every convex subcomplex $Y$ in $X$ is a CAT(0) cube complex dual to the collection of hyperplanes that intersect $Y$. In particular, $\Hull(A)$ is a CAT(0) cube complex dual to the collection of hyperplanes $\mathcal H_A = \{\h\in\mathcal H(X): h\cap A \neq\emptyset\text{ and } h^*\cap A \neq\emptyset\}$.

 The first part of the lemma below, in the stronger version where $k=\dim\left( \Hull(\gamma_x)\right)$ but under the assumption that $x\h$ and $\h$ are disjoint, can be found in \cite[Proposition 5.4]{FernosForesterTao19}. The second part of the below lemma also follows from \cite[Theorem 2.1]{HaettelArtin}. Related results can be also found in \cite{Genevois19}.
\begin{lem}\label{lem:convex hull}
\item
\begin{enumerate}
\item The combinatorial convex hull $\Hull(\gamma_x)$ of an axis $\gamma_x$ of $x$ isometrically embeds in $\E^k$ for some $k\geq1$.
\item The $0$-skeleton of $\Hull(\Min^0(x))$ is contained in $\Min^0(x^{n!})$.
\end{enumerate}
\end{lem}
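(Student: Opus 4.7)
The plan for part~(1) is to exploit the action of $\langle x\rangle$ on the set $\mathcal{H}_{\gamma_x}$ of hyperplanes crossing $\gamma_x$. This action has exactly $\delta(x)$ orbits, since $\gamma_x$ has $\delta(x)$ edges per $\langle x\rangle$-fundamental domain. Every hyperplane in $\mathcal{H}_{\gamma_x}$ is skewered by $x$, so by the argument preceding Lemma~\ref{lem:ramsey} we may choose halfspaces so that $x^{n!}h\subsetneq h$ for each orbit representative; the $\langle x^{n!}\rangle$-sub-orbits are then pairwise nested chains of hyperplanes. I would then isolate a finite collection of parallelism classes of hyperplanes in $\Hull(\gamma_x)$, declaring two hyperplanes parallel when they cross exactly the same hyperplanes of $\Hull(\gamma_x)$, and bounding the number of such classes in terms of $\delta(x)$ and $n$. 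With $k$ parallelism classes in hand, I construct a map $\phi : \Hull(\gamma_x) \to \Z^k \subseteq \E^k$ sending a vertex $v$ to the tuple whose $i$-th entry is the signed count of hyperplanes in the $i$-th class separating $v$ from a fixed basepoint. The $\ell_1$-distance between $\phi(v)$ and $\phi(w)$ equals the total number of hyperplanes of $\Hull(\gamma_x)$ separating $v$ from $w$, so $\phi$ is an isometric embedding.

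For part~(2), the first observation is that $\Min^0(x) \subseteq \Min^0(x^{n!})$. Indeed, for $p \in \Min^0(x)$ the triangle inequality gives $\dist(p, x^{n!}p) \leq n!\,\dist(p, xp) = n!\,\delta(x)$, which equals $\delta(x^{n!})$ provided $x^{n!}$ acts without hyperplane inversions; the $n!$-trick ensures the latter, as any hyperplane cyclically permuted by a power of $x$ of order at most $n$ is fixed, halfspace and all, by $x^{n!}$. I then invoke Haglund's result (cited in the excerpt) that the combinatorial minset of an isometry acting without hyperplane inversions is a convex subcomplex of $X$. Thus $\Min(x^{n!})$ is a convex subcomplex containing $\Min^0(x)$, and hence contains $\Hull(\Min^0(x))$; passing to $0$-skeletons yields the claim.

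The main obstacle lies in part~(1): showing that the number of parallelism classes of hyperplanes in $\Hull(\gamma_x)$ is finite. Within a single $\langle x\rangle$-orbit the hyperplanes need not be pairwise nested—only the $\langle x^{n!}\rangle$-sub-orbits are—so one has to carefully track crossings between hyperplanes from different $\langle x\rangle$-orbits, using the dimension bound $n$ to cap the number of classes. Part~(2), by contrast, is comparatively direct once Haglund's convexity theorem is invoked, with the $n!$ factor precisely guaranteeing the absence of hyperplane inversions for $x^{n!}$.
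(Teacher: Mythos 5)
Your plan for part~(1) has a genuine gap. You define two hyperplanes of $\Hull(\gamma_x)$ to be ``parallel'' when they cross exactly the same set of hyperplanes of $\Hull(\gamma_x)$, and you propose to bound the number of such classes; as you yourself flag, this is the crux, and in fact there is no such bound. Consider a CAT(0) square complex in which the hyperplanes crossed by $\gamma_x$ are $\{\h_i\}_{i\in\Z}$ with $\h_i$ crossing $\h_j$ iff $|i-j|=1$, and $x$ shifting $\h_i\mapsto\h_{i+1}$ (a ``staircase''). Then $\h_i$ crosses $\{\h_{i-1},\h_{i+1}\}$, which differs from the neighborhood of every other $\h_j$, so every hyperplane is in its own parallelism class and no finite $k$ emerges. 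The paper avoids this by using a different partition: the $\langle x^{n!}\rangle$-orbits of the $k=n!\,\delta(x)$ hyperplanes separating $p$ from $x^{n!}p$. Each orbit is a nested chain because $x^{n!}h\subsetneq h$, so the orbits form a finite partition of the hyperplanes of $\Hull(\gamma_x)$ into families of pairwise-disjoint hyperplanes. The paper then cites Chepoi--Hagen to embed $\Hull(\gamma_x)$ into a product of $k$ trees dual to these families, and observes that each tree is a line since the axis crosses every hyperplane in its family. If you replace ``parallelism class'' by ``$\langle x^{n!}\rangle$-orbit'' throughout, your coordinate map $\phi$ then works and is essentially the paper's embedding.

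For part~(2) you take a different and shorter route than the paper. Your observation that $\Min^0(x)\subseteq\Min^0(x^{n!})$ via $\delta(x^{n!})=n!\,\delta(x)$ is correct and clean. However, you then assert that $\Min^0(x^{n!})$ is the $0$-skeleton of a convex subcomplex and attribute this to the Haglund reference; but the paper only cites Haglund for the identity $\delta(x^k)=k\delta(x)$, not for convexity of the combinatorial minset, so this step needs an independent citation or proof. The paper instead establishes part~(2) without invoking any such convexity: it analyzes $\mathcal H_x$ to show $\Hull(\Min^0(x))$ decomposes as $Y\times Y^{\perp}$ with $Y$ dual to $\sk(x)$, notes each slice $Y\times\{p\}$ is a $\Hull(\gamma_x)$, and then uses a fact proved inside its part~(1) argument --- namely that $x^{n!}$ acts on $\E^k$ as translation by $[1,\dots,1]$, so every $0$-cube of $\Hull(\gamma_x)$ already lies in $\Min^0(x^{n!})$. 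Be aware that your version of part~(1) does not yield this last fact, so if you keep your route for part~(2) you must actually verify the convexity of $\Min^0(x^{n!})$, whereas the paper's two parts are deliberately intertwined.
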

\begin{proof}
Let $p$ be some $0$-cube of $\gamma_x$. Let $\h_1,\dots, \h_k$ denote all the hyperplanes separating $p$ and $x^{n!}p$ (in particular, $k=n!\delta(x)$). Since $x^{n!}h_i\subset h_i$ for all $i$ and appropriate choice of halfspace $h_i$ of $\h_i$, the partition of the set of all hyperplanes skewered by $x$ into $\{x^{in!}\h_1\}_{i\in\Z}, \dots ,\{x^{in!}\h_k\}_{i\in\Z}$ gives an isometric embedding of $\Hull(\gamma_x)$ into a product of $k$ trees by \cite{ChepoiHagen13} where each tree is the cube complex dual to the collection of pairwise disjoint hyperplanes $\{x^{in!}\h_j\}_{i\in\Z}$. Since all these hyperplanes are intersected by a single bi-infinite geodesic (an axis of $x$), all the trees are in fact lines, i.e.\ $\Hull(\gamma_x)$ isometrically embeds in $\E^k$ with the standard cubical structure. The action of $x^{n!}$ extends to the action to $\E^k$ as a translation by the vector $[1,\dots, 1]$. Thus every $0$-cube of the combinatorial convex hull $\Hull(\gamma_x)$ is translated by $k=n!\delta(x) = \delta(x^{n!})$ and therefore the $0$-skeleton of $\Hull(\gamma_x)$ is contained in $\Min^0(x^{n!})$.

The subcomplex $\Hull(\Min^0(x))$ is dual to $\mathcal H_x =\{\h
: \Min^0(x) \cap h\neq\emptyset \text{ and } \Min^0(x) \cap h^*\neq\emptyset\}$. If $\h\notin\mathcal H_x - \sk(x)$, and $p,p'\in \Min^0(x)$ are separated by $\h$, i.e.\ $p\in h$ and $p'\in h^*$, then $x$ is parallel to $\h$. Indeed, $x^ip\in h$ and $x^ip'\in h^*$ for all $i$ and since $\dist(x^ip, x^ip') = \dist(p,p')$ the axis $\gamma_x$ through $p$ is contained in $N_d(\h)$ where $d\leq \dist(p,p')$. Thus the set $\mathcal H_x$ consists of hyperplanes skewered by $x$ or parallel to $x$. It follows that $\Hull(\Min^0(x))$ decomposes as a product $Y\times Y^{\perp}$ where $Y$ is dual to $\sk(x)$ and $Y^{\perp}$ is dual to the set of all the hyperplanes of $\mathcal H_x$ that are parallel to $x$ (see also \cite{KarSageev16}). For each $p\in Y^{\perp}$ the complex $Y\times\{p\}$ is the combinatorial convex hull of an axis of $x$. It follows that $\Hull(\Min^0(x))$ is the union of the complexes of the form $\Hull(\gamma_x)$ and so the $0$-skeleton of $\Hull(\Min^0(x))$ is contained in $\Min^0(x^{n!})$.

\end{proof}

\begin{lem}\label{lem:euclidean}
Let $X$ be a CAT(0) cube complex that is a subcomplex of a CAT(0) cube complex that is quasi-isometric to $\mathbb E^k$.
Then any finitely generated group $G$ for which there is a bound on the size of its finite subgroups, that acts properly on $X$, 
is virtually abelian. 
\end{lem}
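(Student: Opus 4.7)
The plan is to prove the lemma in three stages: first establish that $G$ has polynomial growth, then apply Gromov's polynomial growth theorem, and finally upgrade virtual nilpotence to virtual abelianness using the CAT(0) cube complex structure together with Lemma~\ref{lem:convex hull}.

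For the first stage, let $W$ denote the ambient CAT(0) cube complex containing $X$ as a subcomplex, so $W$ is quasi-isometric to $\mathbb E^k$. Consequently the $0$-skeleton of $W$ has polynomial vertex growth of degree $k$: there exists $C$ with $|B_W(q,R)\cap W^0|\le CR^k$ for all $q\in W^0$ and $R\ge 1$. Since $X$ is a subcomplex of $W$, distances in $X$ dominate distances in $W$, so $B_X(p,R)\cap X^0\subseteq B_W(p,R)\cap W^0$ for any $p\in X^0$. Fix such a basepoint $p$ and a finite generating set $S$ of $G$, and set $L=\max_{s\in S}\dist(p,sp)$. The orbit map $\phi\colon G\to X^0$, $\phi(g)=gp$, is $L$-Lipschitz, and its fibres are cosets of the point-stabilizer of $p$, which is finite of size at most the uniform bound $N$ on finite subgroups of $G$. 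Therefore $|B_G(1,r)|\le NC(Lr)^k$, so $G$ has polynomial growth, and Gromov's theorem gives that $G$ is virtually nilpotent.

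For the final stage, which I expect to be the main obstacle, I would pass to a finite-index, torsion-free nilpotent subgroup $N\le G$ acting without hyperplane inversions, and induct on the nilpotency class of $N$. The abelian case is immediate. For the inductive step, pick a nontrivial element $a$ in the center of $N$; by properness and torsion-freeness, $a$ acts as a hyperbolic isometry. Since $a$ is central, $N$ preserves $\Min^0(a)$ and hence its combinatorial convex hull, which by Lemma~\ref{lem:convex hull} splits as a metric product $Y\times Y^\perp$ with $Y$ isometrically embedded in some $\mathbb E^m$, $a$ acting as a nontrivial translation on $Y$, and $a$ fixing $Y^\perp$ pointwise. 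Combining the Bieberbach-style rigidity of the induced nilpotent action of $N$ on the Euclidean factor $\mathbb E^m$ with the induction hypothesis applied to the kernel of that action---which acts properly on the subcomplex $Y^\perp$ of $W$---should yield that $N$ is virtually abelian. The delicate point is showing the central extension built from the abelian translation image and the virtually abelian kernel remains virtually abelian in our nilpotent setting, ruling out Heisenberg-type behaviour.
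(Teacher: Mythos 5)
Your first two stages (polynomial growth of $X^0$, hence of $G$, hence virtual nilpotence via Gromov) coincide with the paper's proof, which simply says "the growth of $X^0$ is a polynomial of degree at most $k$ and so is the growth of $G$; hence $G$ is virtually nilpotent." The divergence is in the final step. The paper disposes of it in one line by citing \cite[Thm 7.16]{BridsonHaefliger}: a finitely generated virtually nilpotent group acting properly by \emph{semi-simple} isometries on a CAT(0) space is virtually abelian, and semi-simplicity is already available here since isometries of finite-dimensional CAT(0) cube complexes are semi-simple (the Haglund result invoked earlier in the paper). You instead try to reprove this fact directly via an induction on nilpotency class and the product decomposition from Lemma~\ref{lem:convex hull}.

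That sketch has a genuine gap, and it is exactly the one you flag. Knowing that the kernel $K = \ker\bigl(N \to \operatorname{Isom}(Y)\bigr)$ is virtually abelian and that the image in $\operatorname{Isom}(Y)$ is abelian (translations) does \emph{not} by itself rule out a nonabelian central extension: the integral Heisenberg group is precisely of that form, torsion-free, class $2$, and not virtually abelian. What actually kills Heisenberg-type behaviour is the following additional piece of information, which your write-up omits: the central element $a$ acts as a \emph{nontrivial translation} on the abelian factor $Y$, so its image under the homomorphism $N \to \operatorname{Trans}(Y)$ is nonzero; but if $N$ is nonabelian nilpotent one may choose $a$ to lie in $[N,N] \cap Z(N)$, and then $a$ dies under any homomorphism to an abelian group, a contradiction. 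Without this argument (or an equivalent), "Bieberbach-style rigidity plus induction" does not close. A secondary issue is that the induction you set up is on nilpotency class, but it is not clear that the kernel $K$ acting on $Y^\perp$ has strictly smaller class; the standard version of this argument (as in Bridson--Haefliger) inducts instead on Hirsch rank, which does decrease because the central element has nontrivial translation image. So the proposal is not incorrect in outlook, but it re-derives a cited theorem and leaves its essential step open.
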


\begin{proof}
The growth of $X^0$ is a polynomial of degree at most $k$ and so is the growth of $G$. Hence $G$ is virtually nilpotent, and by \cite[Thm 7.16]{BridsonHaefliger} $G$ is virtually abelian.
\end{proof}

%

\section{Constructing small cancellation presentations}\label{sec:small cancellation}
The main goal of this section is the following. 
\begin{prop}\label{prop:small cancellation presentation} Let $p\geq 6$. Let $\mathcal U = \{(u_i, v_i)\}_{i=1}^m$ be a finite collection of pairs where for each $i$ the elements $u_i, v_i\in F(x,y)$ are not powers of the same element. There exists a $C'(1/p)$ small cancellation presentation 
$$\langle x,y \mid r_1,\dots ,r_m\rangle$$
where $r_i$ is a positive word in $u_i, v_i$ that is not a proper power for $i=1,\dots, m$.
\end{prop}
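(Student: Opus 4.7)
The plan is to build each relator as a long positive concatenation of powers of $u_i$ and $v_i$ with carefully chosen exponents, and then verify $C'(1/p)$ directly from the resulting block structure.

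I would first replace each $u_i, v_i$ by cyclically reduced conjugates, which preserves the hypothesis that they are not powers of a common element, so that $\langle u_i, v_i\rangle \le F(x,y)$ remains a rank-$2$ free subgroup. I would then set
$$r_i \;=\; u_i^{a_{i,1}}\, v_i^{b_{i,1}}\, u_i^{a_{i,2}}\, v_i^{b_{i,2}} \cdots u_i^{a_{i,N}}\, v_i^{b_{i,N}},$$
with strictly increasing exponents $a_{i,j}, b_{i,j}$ drawn from an arithmetic progression congruent to $i$ modulo a large integer $M$ (for instance $a_{i,j} = Mj+i$ and $b_{i,j} = M(N+j)+i$), for a number of blocks $N$ to be fixed at the end. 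Since $\langle u_i, v_i\rangle$ is rank-$2$ free and $u_i, v_i$ are cyclically reduced, the blocks $u_i^{a_{i,j}}$ and $v_i^{b_{i,j}}$ survive cyclic reduction of $r_i$ up to bounded cancellation of at most $\min(|u_i|,|v_i|)$ letters at each junction, and the strictly increasing (hence aperiodic) exponent sequence guarantees that $r_i$ is not a proper power.

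For the piece bound, let $L = \max_i \max(|u_i|,|v_i|)$ and consider a piece $w$ shared between two distinct cyclic conjugates of $r_i^{\pm 1}$ and $r_j^{\pm 1}$. Once $|w|$ exceeds a bounded multiple of $L$, $w$ must contain at least one complete block $u_i^{a_{i,k}}$ (or $v_i^{b_{i,k}}$) of $r_i$, together with the transition to the next block. If $i=j$ and the orientations agree, strict monotonicity of $(a_{i,k})$ and $(b_{i,k})$ forces the two occurrences of $w$ to align at the same $k$, hence to come from the same cyclic conjugate, contradicting the definition of piece. If $i\ne j$ (with either orientation), the exponent of the identified block lies in residue class $i\pmod M$, while the corresponding block read off the other occurrence of $w$ in $r_j^{\pm1}$ would have exponent in class $j\pmod M$, which is impossible. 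In the remaining case $i=j$ with opposite orientations, matching a long positive $u_i,v_i$-block word against a negative one yields a nontrivial relation in $\langle u_i,v_i\rangle$, and rank-$2$ freeness forces $|w|$ to be bounded by a constant depending only on $L$.

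The analysis yields a uniform piece-length bound $B = B(L, M, \{u_i,v_i\})$ independent of $N$, while $|r_i|$ grows linearly in $N$. Choosing $N$ large enough makes $B < \tfrac{1}{p}|r_i|$ for every $i$, giving the $C'(1/p)$ condition. The main obstacle, I expect, is the careful bookkeeping of boundary cancellation at junctions and the orientation-reversing case, where one must turn the informal block-matching picture into rigorous equalities inside $F(x,y)$; once this is done, the residue-class device cleanly separates blocks originating from different relators and the rank-$2$ freeness of each $\langle u_i,v_i\rangle$ handles the self-overlap obstructions.
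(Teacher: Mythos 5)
Your block-structure strategy is natural and the same-relator, same-orientation case is close to what the paper does (the paper also uses distinct exponents, in Lemma~\ref{lem:pieces in single relator}), but there are two genuine gaps that the paper's argument is specifically engineered to avoid.

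First, the residue-class device cannot control pieces between two \emph{different} relators $r_i$ and $r_j$. A piece $w$ is a common word in $x,y$; as a subword of $r_i$ it has a decomposition into $(u_i,v_i)$-blocks, and as a subword of $r_j$ a completely unrelated decomposition into $(u_j,v_j)$-blocks. The two block structures are decompositions of the \emph{same} string of letters into two different ``alphabets,'' and the block boundaries need not align, so the phrase ``the corresponding block read off the other occurrence of $w$ in $r_j$'' has no meaning. Concretely, if $u_i=u_j=x$ (which the hypotheses allow), then both $r_i$ and $r_j$ contain long $x$-runs and $x^q$ with $q$ on the order of the largest exponent is a piece; no congruence condition on exponents rules this out, and in fact the piece length grows with $N$, contradicting the claimed ``uniform piece-length bound $B$ independent of $N$.'' (Your construction may still survive here because $|r_i|$ grows like $N^2$ while such pieces grow like $N$, but the argument you give does not prove this.) The paper's substitute for the residue-class idea is a genuinely different mechanism: Lemma~\ref{lem:virtually conjugate} produces, from each input pair, base words of the form $u_i^{k}v_i^{k}$ that are pairwise \emph{non-virtually-conjugate} across all indices, and Lemma~\ref{lem:long overlap} then shows that a long common subword of $s_i^*$ and $t_j^*$ forces virtual conjugacy. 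This is what actually kills cross-relator pieces, and there is no analogue in your proposal.

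Second, passing to cyclically reduced conjugates of $u_i,v_i$ does \emph{not} control the cancellation at junctions $u_i^a v_i^b$. For cyclically reduced $u_i,v_i$ the cancellation $\canc(u_i,v_i)$ can still equal $\min\{|u_i|,|v_i|\}$, and the cancellation inside $u_i^{a}v_i^{b}$ can be unbounded: take $u_i=x$ and $v_i=x^{-K}y$ (both cyclically reduced, not powers of a common element); then $u_i^{a}v_i = x^{a-K}y$, and the whole $u_i$-block can be swallowed if $a\le K$. So ``bounded cancellation of at most $\min(|u_i|,|v_i|)$ letters at each junction'' is false as stated. The paper handles this by first replacing $u_i,v_i$ with a \emph{non-cancellable} pair via Lemma~\ref{lem:less than half cancellation} (cancellation strictly less than half the shorter word), which is a strictly stronger normalization than cyclic reduction and is what makes the block picture survive. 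Relatedly, the argument for the orientation-reversed self-piece case (``yields a nontrivial relation in $\langle u_i,v_i\rangle$'') does not hold: two subword occurrences of $w$ and $w^{-1}$ inside a fixed reduced word in $F(x,y)$ are combinatorial coincidences, not relations in a subgroup, and rank-2 freeness of $\langle u_i,v_i\rangle$ gives you nothing directly there.
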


By $F(x,y)$ in the above Lemma and throughout the section we denote the free group on generators $x$ and $y$. The length of a word $u$ with respect to $x,y$ is denoted by $|u|$. A \emph{spelling} of a nontrivial element $u\in F(x,y)$ is a concatenation $u_1\cdots u_m = u$ where each \emph{syllable} $u_i$ is a nontrivial element of $F(x,y)$. The \emph{cancellation} in the spelling $uv$ is the value $\canc(u,v) = \frac1 2 (|u|+|v|-|uv|)$, i.e.\ the length of the common prefix of the reduced words representing $u^{-1}$ and $v$. A spelling is \emph{reduced} if $\canc(u_i, u_{i+1}) = 0$ for $i=1,\dots, m-1$; in other words $|u| = \sum_i |u_i|$. A spelling is \emph{cyclically reduced} if additionally $\canc(u_{m}, u_1) = 0$. For $u,v\in F(x,y)$ we say $u,v$ are \emph{virtually conjugate} and write $u\sim v$ if some powers of $u$ and $v$ are conjugate. We denote a free semigroup on $u,v$ by $\{u, v\}^+$. Let $u^*$ denote 
an element $u^k$ for some $k\geq 0$.

A \emph{piece} in $u$ and $v$ is a syllable $w$ such that $wu'$ and $wv'$ are reduced spellings of some cyclic permutation of $u$ and $v$ respectively. We emphasize that all the pieces considered throughout the section are words in $x,y$. A presentation $\langle x,y\mid  r_1,\dots ,r_m \rangle$ is \emph{$C'(1/p)$ small cancellation} if for every piece $w$ in $r_i$ and $r_j$ we have $|w|<\frac 1 p |r_i|$. For more background on small cancellation theory, see \cite{LS77}.

\begin{lem}\label{lem:C-to-1} Let $H$ be a finitely generated subgroup of $F_k$. There exists a constant $C=C(H<F_k)$ such that the map between the conjugacy classes of maximal $\Z$-subgroups induced by the the inclusion $H\hookrightarrow F_k$
\begin{com}the maximal $\Z$-subgroup in $H$ is contained not necessarily equal to a maximal $\Z$ subgroup in $F_k$.\end{com} is at most $C$-to-$1$.
\end{lem}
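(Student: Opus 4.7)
The plan is to use the Stallings graph of $H$. Let $X=\bigvee_k S^1$ with $\pi_1(X)=F_k$, and let $\iota\colon \Gamma_H\to X$ be the (label-preserving) immersion from the finite Stallings graph of $H$. Conjugacy classes of nontrivial elements of $H$ are in bijection with cyclically reduced closed paths in $\Gamma_H$ taken up to cyclic rotation; conjugacy classes of maximal $\Z$-subgroups of $H$ correspond to such paths that are primitive (i.e.\ not proper powers), further identified under reversal.

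Fix a primitive element $g\in F_k$ representing a conjugacy class of maximal $\Z$-subgroups of $F_k$. The preimages of $[\langle g\rangle]$ under the induced map are precisely the primitive cyclic reduced paths $\alpha$ in $\Gamma_H$ whose $\iota$-image equals $g^n$ as a cyclic word, for some $n\geq 1$ (up to inversion). Since $\iota$ is an immersion and $g$ is reduced, at each $v\in\Gamma_H^0$ there is at most one way to start reading the letters of $g$ along outgoing edges of $\Gamma_H$; attempting to lift the full word $g$ from $v$ either fails or ends at a uniquely determined vertex, defining a partial self-map $\sigma$ of $\Gamma_H^0$. A primitive cyclic lift of $g^n$ in $\Gamma_H$ corresponds exactly to a periodic $\sigma$-orbit of length $n$, obtained by concatenating the successive $g$-lifts.

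Distinct $\sigma$-orbits determine distinct cyclic loops in $\Gamma_H$, hence distinct $H$-conjugacy classes. Each resulting loop represents an element $h\in H$ with $h=fg^nf^{-1}$ in $F_k$ for some $f$; if $h=(h')^d$ for some $h'\in H$ and $d\geq 2$, then uniqueness of roots in the free group forces $h'=fg^{n/d}f^{-1}$, which would produce a $\sigma$-orbit of length $n/d$ through some vertex of the original orbit, contradicting that the original has length $n$. Thus $\langle h\rangle$ is a maximal $\Z$-subgroup of $H$. Since $\sigma$ is a partial self-map of the finite set $\Gamma_H^0$, the number of its periodic orbits is at most $|\Gamma_H^0|$, and we may take $C=|\Gamma_H^0|$, a number depending only on the inclusion $H<F_k$.

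The main obstacle is setting up the correspondence rigorously: one must check that $\sigma$-periodic orbits biject with $H$-conjugacy classes of maximal $\Z$-subgroups whose image lies in a conjugate of $\langle g\rangle$. The subtle points are primitivity of the resulting loop in $\Gamma_H$ and maximality of $\langle h\rangle$ in $H$, both of which rely on $g$ being primitive in $F_k$ (so that $\langle g\rangle$ is malnormal) and on unique root extraction in free groups.
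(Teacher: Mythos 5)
Your proof is correct and uses essentially the same idea as the paper: both work with the Stallings (core) graph immersion $\Gamma_H \looparrowright X$ and bound the count by the number of vertices of $\Gamma_H$. The paper phrases this as counting lifts of an immersed line $L \to X$ (representing $[\langle g\rangle]$) to $\Gamma_H$, whereas you reformulate it combinatorially as counting periodic orbits of the partial self-map $\sigma$ obtained by reading $g$; these are two descriptions of the same thing.
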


\begin{proof} Let $A\looparrowright B$ be an immersion of graphs where $B$ is a wedge of $k$ circles, where the induced map on the fundamental groups is the inclusion $H\hookrightarrow F_k$. 

For any graph $\Gamma=A,B$, the conjugacy class of a $\Z$-subgroup in $\pi_1 \Gamma$ can be represented by an immersion $L\looparrowright \Gamma$ of a line that factors as $L\looparrowright S\looparrowright \Gamma$ where $S$ is a circle, taken modulo the orientation. Thus different conjugacy classes of $\Z$-subgroups in $H$ that map into the same conjugacy class in $F_k$ are different lifts 
\[\begin{tikzcd}&A \arrow[d] \\L\arrow[r] \arrow[ru, dotted] & B\end{tikzcd}\] The number of such lifts is bounded by the number of vertices in $A$.
\end{proof}

\begin{lem}\label{lem:virtually conjugate} Let $u,v\in F(x,y)$ be such that $u$ and $v$ are not powers of the same element. There are infinitely many pairwise non virtually conjugate elements of the form $u^kv^k$.\end{lem}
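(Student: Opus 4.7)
The plan is to work inside the subgroup $H=\langle u,v\rangle\leq F(x,y)$ and then transfer the conclusion to the ambient group using Lemma~\ref{lem:C-to-1}. Since $u$ and $v$ are not powers of a common element, $\langle u,v\rangle$ is a non-cyclic subgroup of a free group, hence a free group of rank at most $2$, and being non-abelian it has rank exactly $2$. Therefore $H=F(u,v)$ and we may reason inside a rank-two free group on the alphabet $\{u,v\}$.

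Next, for each $k\geq 1$, the word $u^kv^k$ is cyclically reduced in $H=F(u,v)$. A short cyclic-word argument (for instance, counting the single occurrence of the transition $uv$ in the cyclic word $u^kv^k$) shows that $u^kv^k$ is not a proper power in $H$ and that its cyclic conjugates in $H$ are precisely the $2k$ rotations $u^{k-j}v^ku^j$ and $v^{k-j}u^kv^j$. None of these equals $u^{k'}v^{k'}$ for $k'\neq k$, and $u^kv^k$ is not conjugate to $(u^{k'}v^{k'})^{-1}=v^{-k'}u^{-k'}$ either, since the latter involves only inverse letters. Hence the elements $\{u^kv^k\}_{k\geq 1}$ represent pairwise distinct conjugacy classes of maximal $\mathbb Z$-subgroups of $H$.

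Now I would invoke Lemma~\ref{lem:C-to-1} for the inclusion $H\hookrightarrow F(x,y)$: the induced map between conjugacy classes of maximal $\mathbb Z$-subgroups is at most $C$-to-$1$ for some constant $C$ depending only on $H<F(x,y)$. Therefore the infinitely many conjugacy classes of maximal $\mathbb Z$-subgroups of $H$ represented by the family $\{u^kv^k\}_{k\geq 1}$ project to infinitely many conjugacy classes of maximal $\mathbb Z$-subgroups of $F(x,y)$. Finally, in a free group two elements $a,b$ are virtually conjugate if and only if the maximal cyclic subgroups containing them are conjugate, because the centralizer of any nontrivial element of $F(x,y)$ is its unique maximal cyclic overgroup. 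Consequently an infinite subfamily of $\{u^kv^k\}$ is pairwise non virtually conjugate, as required.

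The main point where one has to be a little careful is the equivalence between virtual conjugacy and conjugacy of maximal $\mathbb Z$-subgroups in the ambient free group, since a priori two elements that are non-virtually-conjugate in $H$ could become virtually conjugate in $F(x,y)$; it is precisely Lemma~\ref{lem:C-to-1} that controls this ``collapse'' and prevents it from merging more than finitely many of the conjugacy classes we have produced.
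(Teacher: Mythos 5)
Your proposal is correct and takes essentially the same approach as the paper: both reduce to the observation that $u,v$ freely generate a rank-$2$ free subgroup $H$, show that the $u^kv^k$ lie in pairwise distinct conjugacy classes of maximal $\Z$-subgroups of $H$, and then invoke Lemma~\ref{lem:C-to-1} to control how many of those classes can merge under $H\hookrightarrow F(x,y)$. The only cosmetic difference is that where the paper cites the classical characterization of virtual conjugacy in free groups from \cite[Prop.\ 2.17]{LS77}, you instead give a direct cyclic-word argument (one $u\to v$ transition in the cyclic word, hence not a proper power; rotations and the sign of exponents rule out conjugacy to $u^{k'}v^{k'}$ or its inverse) — a slightly more self-contained route to the same intermediate claim.
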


\begin{proof} Two elements of $F(x,y)$ are virtually conjugate if and only if they have the following reduced spellings
\[
gw^ig^{-1}
\]
\[
h\bar w^jh^{-1}
\]
where $g,h$ are reduced words in $x,y$, $w$ is cyclically reduced and $\bar w$ is a cyclic permutation of $w$ \cite[Prop 2.17]{LS77}. In particular the elements of the set $\{x^ky^k:k\in\Z\}$ are not virtually conjugate, i.e.\ they are contained in distinct conjugacy classes of maximal $\Z$-subgroups. Since $u,v$ are not powers of the same element, the group $\langle u,v\rangle$ is a rank $2$ free group. By Lemma~\ref{lem:C-to-1} there exists a constant $C$ such that the map between the conjugacy classes of maximal $\Z$-subgroups induced by the inclusion $\langle u,v\rangle \hookrightarrow F(x,y)$ is at most $C$-to-$1$. The lemma follows.
\end{proof}

We say that elements $u,v\in F(x,y)$ are \emph{non-cancellable}, if 
\[\canc(u,v) < \frac 1 2 \min\{|u|, |v|\},\]
\[\canc(v,u) < \frac 1 2 \min\{|u|, |v|\}.\]
Equivalently,
for any $w_1,w_2\in \{u,v\}^+$ we have $\canc(w_1,w_2) < \frac 1 2 \min\{|u|, |v|\}$. In particular, $|w_1w_2|\geq \max\{|w_1|,|w_2|\}$. The equivalence is obvious in one of the direction. For the other direction, note that the cancellation between consecutive syllables of $w_1, w_2$ are separated by at least one letter. We remark that if $u,v$ are non-cancellable then so are any two elements in $\{u,v\}^+$.
\begin{lem}\label{lem:less than half cancellation}
Let $u, v\in F(x,y)$ not be powers of the same element. Then there exist elements $u',v'\in\{u,v\}^+$ that are non-cancellable and are not powers of the same element.
\end{lem}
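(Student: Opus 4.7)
The plan is to set $u' = u^N$ and $v' = v^N$ for $N$ sufficiently large. Both $u', v'$ lie in $\{u,v\}^+$, and I must verify that they are non-cancellable and not powers of a common element.

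The ``not powers of a common element'' part is quick. Since $u, v$ are not powers of a common element in $F(x,y)$, they do not commute. In a free group the centraliser of any nontrivial element is infinite cyclic, so $u^N$ and $v^N$ commute if and only if $u$ and $v$ do; hence $u^N, v^N$ do not commute and therefore cannot be powers of a common element.

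The substantive step is to bound $\canc(u^N, v^N)$ and $\canc(v^N, u^N)$ independently of $N$, since $|u^N|$ and $|v^N|$ grow linearly. Write the reduced decompositions $u = g_u \bar u g_u^{-1}$ and $v = g_v \bar v g_v^{-1}$ with $\bar u, \bar v$ cyclically reduced; then $u^{\pm N} = g_u \bar u^{\pm N} g_u^{-1}$ and $v^{\pm N} = g_v \bar v^{\pm N} g_v^{-1}$ are already reduced. By definition $\canc(u^N, v^N)$ is the length of the longest common prefix of $u^{-N}$ and $v^N$, which for $N$ large equals the common prefix length of the half-infinite eventually periodic words $X = g_u\, \bar u^{-1}\bar u^{-1}\cdots$ and $Y = g_v\, \bar v\,\bar v\cdots$. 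The claim is that this common prefix is finite: otherwise $X = Y$ as infinite words, which forces $\bar u^{-1}$ and $\bar v$ to generate the same infinite periodic word and hence to be cyclic rotations of a common primitive word; matching $g_u, g_v$ with the corresponding cyclic shift then yields $v = u^{-1}$, contradicting the hypothesis. The symmetric argument bounds $\canc(v^N, u^N)$, and taking $N$ so large that both $|u^N|$ and $|v^N|$ exceed twice this fixed bound completes the verification.

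The main obstacle is the matching-of-periodic-tails step: ruling out an infinite common prefix for $X$ and $Y$ requires a brief case split on the relative lengths of $g_u$ and $g_v$, but in every case matching the cyclic shift with the finite prefixes inexorably forces $v = u^{-1}$.
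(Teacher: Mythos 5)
Your approach is correct in outline but genuinely different from the paper's. The paper does not take high powers directly: it runs an iterative reduction, replacing $(u,v)$ by $(u,uv)$ or $(v,uv)$ (and the analogous moves for $\canc(v,u)$) whenever the cancellation exceeds half the shorter length, using $|u|+|v|$ as a strictly decreasing measure to guarantee termination, and noting $\langle u, uv\rangle = \langle u,v\rangle$ so the ``not powers of a common element'' condition persists; it then squares once at the end to turn the weak inequality $\leq \tfrac12\min\{|u|,|v|\}$ into a strict one. This is elementary and avoids any periodicity analysis. Your route --- take $u^N, v^N$ and argue that $\canc(u^N,v^N)$ stabilizes because the half-infinite words $g_u\bar u^{-\infty}$ and $g_v\bar v^{\infty}$ can share only a finite prefix --- is a legitimate alternative, and the ``not powers of a common element'' step (via centralizers) is fine.

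One imprecision in your periodicity step should be flagged, though it does not break the argument. You claim that an infinite common prefix would force $\bar u^{-1}$ and $\bar v$ to be cyclic rotations of a common primitive word and then that $v=u^{-1}$. Neither statement is quite right: $\bar u^{-1}$ and $\bar v$ need not have the same length (they are \emph{powers} of cyclic rotations of a common primitive word), and the correct conclusion is only that $u$ and $v$ are powers of a common element --- consider $u = w^{-1}$, $v = w^2$ with $g_u=g_v=1$, where $X=Y$ yet $v\neq u^{-1}$. Carrying out the shift-matching carefully one finds $u = z^{-a}$ and $v = z^{b}$ for a common $z$ and positive $a,b$, which is exactly the hypothesis to be contradicted, so the proof closes; you just should not assert $v=u^{-1}$.
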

\begin{proof}
If $\canc(u,v)> \frac 1 2 \min\{|u|, |v|\}$ replace the pair $(u,v)$ with $(u,uv)$ if $|u|\leq|v|$, and with $(v,uv)$ otherwise. If $\canc(v,u) >\frac 1 2 \min\{|u|, |v|\}$ replace the pair $(u,v)$ with $(u,vu)$ if $|u|\leq |v|$, and with $(v,vu)$ otherwise. Repeat these steps until $\canc(u,v),\canc(v,u)\leq\frac 1 2 \min\{|u|,|v|\}$. Since at each step the value $|u|+|v|$ strictly decreases, the procedure terminates in finitely many steps.
The fact that $u,v$ are not powers of the same element is equivalent to $\langle u,v \rangle \neq \Z$. Since $\langle u, uv\rangle = \langle u,v\rangle$ we conclude that $u, uv$ are not powers of the same element either. We argue the same way in all the above cases. 

Note that for any nontrivial element $w\in F(x,y)$ we have $\canc(w,w)<\frac12 |w|$, i.e.\ $|w^2|>|w|$. Let $u' = u^2$ and $v'=v^2$. Since $\canc(u,v),\canc(v,u)\leq \frac 12\min\{|u|, |v|\}$ and so $u'$ and $u$ have the same common prefix with $v^{-1}$ (and also with $(v')^{-1}$) we have $\canc(u',v') = \canc(u,v)\leq\frac 12 \min\{|u|,|v|\}) <\frac 12\min\{|u'|,|v'|\}$ as wanted. Similarly, $\canc(v',u') <\frac 12 \min\{|u'|,|v'|\}$. It follows that $\canc (w_1, w_2)<\frac 1 2 \min\{|u'|,|v'|\}$ for every $w_1,w_2\in\{u',v'\}^+$.
\end{proof}

\begin{lem}\label{lem:long overlap}
Let $s,t$ be two cyclically reduced elements in $F(x,y)$ such that $|s|\geq |t|>0$ such that $s^2$ is a prefix of $t^*$. Then $s,t$ are powers of the same element.
\end{lem}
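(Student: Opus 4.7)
The plan is to read the hypothesis as a statement about honest strings and then invoke the Fine--Wilf periodicity theorem from the combinatorics of words. Since $s$ and $t$ are cyclically reduced, $s^2$ and every power $t^k$ are freely reduced as written, so the hypothesis ``$s^2$ is a prefix of $t^*$'' really asserts that the word $s^2$ occurs letter-by-letter as a prefix of some $t^k$ in the alphabet $\{x^{\pm 1},y^{\pm 1}\}$. Choose $k$ large enough that $|t^k|\geq |s^2|$; no generality is lost.

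The key observation is that the prefix $s^2$ then admits two periods: the tautological period $|s|$ from the factorization $s^2=s\cdot s$, and the period $|t|$ inherited from being a prefix of the $|t|$-periodic word $t^k$. Setting $d:=\gcd(|s|,|t|)$, the assumption $|s|\geq |t|$ gives
\[|s^2|=2|s|\geq |s|+|t|\geq |s|+|t|-d,\]
which is exactly the bound needed to apply the Fine--Wilf theorem. I conclude that $s^2$, and hence its prefix $s$, has period $d$.

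To finish, let $r$ be the length-$d$ prefix of $s$. Since $d$ divides $|s|$, the period-$d$ structure of $s$ forces $s=r^{|s|/d}$. The word $t$ is also a prefix of $s^2$, because $|t|\leq |s|\leq |s^2|$ and the first $|t|$ letters of $s^2$ coincide with those of $t^k$, which spell $t$. Hence $t$ inherits period $d$ from $s^2$, and since $d$ divides $|t|$ I obtain $t=r^{|t|/d}$. Thus $s$ and $t$ are both powers of the common element $r$, as claimed.

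The only nontrivial ingredient is Fine--Wilf, and I expect this to be the main (and really the only) point where care is needed, since it is what converts the length-based periodicity bound into the desired structural statement. If a self-contained argument is preferred, one can replace the citation by a short induction on $|s|+|t|$ that mimics the Euclidean algorithm on word-lengths, writing $s=t^a t'$ with $|t'|<|t|$ and recursing on a shorter cyclically reduced pair; I do not anticipate any serious obstacle either way.
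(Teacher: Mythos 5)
Your proof is correct, and it takes a genuinely different route from the paper's. The paper argues by contradiction: assuming $s,t$ are not powers of a common element, it extracts a nonempty word $w$ that is simultaneously a prefix and a suffix of $t$ (and a prefix of $s$), then splits into the cases $|w|\leq\frac12|t|$ and $|w|>\frac12|t|$, writing $t$ as $wuw$ or as $uw=wu'$ respectively, and in each case comparing the spelling of the prefix of $s^2$ against the spelling of $t^*$ to force $uw=wu$, which makes $u,w$ (hence $s,t$) powers of a common element --- a contradiction. You instead observe that the hypothesis endows the honest string $s^2$ with two periods $|s|$ and $|t|$, check the length bound $2|s|\geq |s|+|t|-\gcd(|s|,|t|)$ (which is automatic from $|s|\geq|t|>0$), and invoke the Fine--Wilf periodicity theorem to get period $d=\gcd(|s|,|t|)$; since $d$ divides both $|s|$ and $|t|$ and both words are prefixes of $s^2$, they are powers of the common length-$d$ prefix $r$. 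Your version is shorter and identifies the lemma for what it is --- a periodicity statement --- at the cost of importing Fine--Wilf (or its standard Euclidean-algorithm-style proof, as you note); the paper's version is self-contained and stays entirely inside elementary word manipulations, essentially reproving the special case of Fine--Wilf it needs. Both are valid; yours is arguably the more transparent. One small remark: you should also observe (it is immediate, but worth a word) that the length-$d$ prefix $r$ you produce is itself reduced --- being a subword of the reduced word $s$ --- so the string identities $s=r^{|s|/d}$ and $t=r^{|t|/d}$ really are identities in $F(x,y)$ and not merely in the free monoid.
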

\begin{proof} 
Suppose that $s$ and $t$ are not powers of the same element. In particular, $s$ is not a power of $t$, so there exists a nonempty prefix $w$ of $s$ that is both some prefix of $t$ and some suffix of $t$. See Figure~\ref{fig:conjugate elements}. 
\begin{figure}\begin{tikzpicture}
\draw (0,0) to (3,0) to (3,.5) to (0,.5) to (0, 0);
 \coordinate [label=$s$] (x) at (1.5,0);
\draw (3.1,0) to (6.1,0) to (6.1,.5) to (3.1,.5) to (3.1, 0);
 \coordinate [label=$s$] (x) at (4.6,0);
\draw (0,-0.2) to (2,-0.2) to (2,-.7) to (0,-.7) to (0, -.2);
 \coordinate [label=$t$] (x) at (1,-.75);
\draw (2.1,-.2) to (4.1,-.2) to (4.1,-.7) to (2.1,-.7) to (2.1, -.2);
 \coordinate [label=$t$] (x) at (3.1,-.75);  
 \draw[red, thick] (3.1,-.1) to (4.1,-.1);
  \draw[red, thick] (0,-.1) to (1,-.1);
 \end{tikzpicture}
\caption{Long overlap between $s^*$ and $t^*$. The red path is $w$.}\label{fig:conjugate elements}

\end{figure}
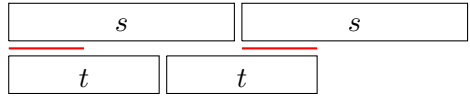
If $|w|\leq\frac1 2|t|$, then $t$ has a reduced spelling $wuw$ for some $u$, and $s$ has a reduced spelling $(wuw)^kwu$ for some $k\geq 1$. Then $s^2$ has a prefix $(wuw)^{k}wu\cdot wuww = (wuw)^{k+1}uww$ which thus must also be a prefix of $t^*$, and so it must coincide with $t^{k+1} = (wuw)^{k+2}$. In particular, $uw = wu$, which means that $w,u$ are powers of the same element. That is a contradiction. 

If $|w|>\frac 12 |t|$, then $t$ has reduced spellings $uw$ and $wu'$ for some $u,u'$ such that $|u|=|u'|<|w|$, and $s$ has a reduced spelling $(uw)^ku$ for some $k\geq 1$. The prefix $(uw)^kuuw$ of $s^2$ must coincide with the prefix $(uw)^{k+1}u$ of $t^{k+2}$. In particular $uw = wu$, which again is a contradiction.
\end{proof}

\begin{lem}\label{lem:small cancellation pair}
Let $u_i, v_i\in F(x,y)$ for $i=1,2$ where for each $i=1,2$ the elements $u_i, v_i$ are non-cancellable and are not powers of the same element.
Then for each $i=1,2$ there exist $s_i, t_i\in \{u_i, v_i\}^+$ 
such that
\begin{itemize}
\item $s_i,t_i$ are non-cancellable and are not virtually conjugate,
\item $\canc(s_i, t_i) = \canc(t_i, s_i) = \canc(s_i, s_i) = \canc(t_i, t_i)$, i.e.\ there exists $g_i$ such that $s_i = g_i\bar s_ig_i^{-1}$ and $t_i = g_i\bar t_ig_i^{-1}$ are reduced spellings where $\bar s_i$, $\bar t_i$ are cyclically reduced and have no cancellation,
\item for every piece $w$ between a word in $\{s_1, t_1\}^+$ and a word in $\{s_2,t_2\}^+$ we have $|w|<\min\{|s_i|, |t_i|\}$ for $i=1,2$.
\end{itemize}
\end{lem}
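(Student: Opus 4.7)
The plan is to look for $s_i, t_i$ of the form $u_i^{a_i}v_i^{a_i}$ and $u_i^{b_i}v_i^{b_i}$, with $a_i\neq b_i$ chosen suitably large, and to verify the three bullets in turn.

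For the first bullet, the remark following Lemma~\ref{lem:less than half cancellation} already tells us that every pair of elements of $\{u_i,v_i\}^+$ is non-cancellable. Since $u_i,v_i$ are not powers of the same element, Lemma~\ref{lem:virtually conjugate} furnishes an infinite set $K_i\subseteq\Z_{>0}$ for which the elements $\{u_i^kv_i^k\}_{k\in K_i}$ are pairwise non virtually conjugate, and it suffices to pick distinct $a_i,b_i\in K_i$ (to be enlarged later).

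For the second bullet, direct inspection of the cyclic reduction will show that $s_i$ and $t_i$ share a common conjugator. Let $d_i$ be the word of length $\canc(v_i,u_i)<\tfrac{1}{2}\min(|u_i|,|v_i|)$ for which $u_i = d_i^{-1}u_i^\circ$ and $v_i = v_i^\circ d_i$ are reduced spellings with $v_i^\circ u_i^\circ$ also reduced. Once $a_i,b_i$ are large enough that the (bounded) internal cancellations inside $u_i^{a_i}$ and $v_i^{a_i}$ do not reach the endpoints, both $s_i$ and $t_i$ admit reduced spellings of the form $d_i^{-1}\cdot(\cdots)\cdot d_i$ whose middle factors $\bar s_i,\bar t_i$ are cyclically reduced, begin with the first letter of $u_i^\circ$, and end with the last letter of $v_i^\circ$. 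Thus $g_i:=d_i^{-1}$ is a common conjugator, and because $v_i^\circ u_i^\circ$ is reduced, both $\bar s_i\bar t_i$ and $\bar t_i\bar s_i$ are reduced, yielding the cancellation identities in the bullet.

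For the third bullet, a piece $w$ between $r_1\in\{s_1,t_1\}^+\subset\{u_1,v_1\}^+$ and $r_2\in\{s_2,t_2\}^+\subset\{u_2,v_2\}^+$ is a common subword of some cyclic conjugates of $r_1,r_2$. I plan to establish a combinatorial claim: there is a bound $M=M(u_1,v_1,u_2,v_2)$, \emph{independent of $a_i,b_i$}, on the length of any such piece. The point is that once $|w|\ge M$, the overlap must contain $u_1^2$ or $v_1^2$ as a prefix of a power (or cyclic conjugate) of one of $u_2,v_2$; Lemma~\ref{lem:long overlap} then forces an algebraic relation identifying one of $u_1,v_1$ with a power of one of $u_2,v_2$. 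A finite case analysis on which elements of $\{u_1,v_1\}$ are commensurable with which elements of $\{u_2,v_2\}$ extracts the uniform bound $M$. Finally, enlarging $a_i,b_i$ within $K_i$ so that $\min\{|s_i|,|t_i|\}>M$ for both $i$ completes the verification.

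The main obstacle is precisely this last step: controlling all possible alignments of the syllable structure of $r_1$ with that of $r_2$, especially in the degenerate configurations where some element of $\{u_1,v_1\}$ is commensurable with an element of $\{u_2,v_2\}$. Lemma~\ref{lem:long overlap} is the key lever that turns a long common subword into an algebraic relation, but packaging these relations into a uniform bound requires careful combinatorial book-keeping; by contrast, the first two bullets are straightforward manipulations of reduced words in $F(x,y)$.
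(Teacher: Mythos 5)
Your treatment of the first two bullets is fine, but the third bullet has a genuine gap, located exactly where your final paragraph flags ``the main obstacle.'' Your plan is to establish a piece bound $M=M(u_1,v_1,u_2,v_2)$ that is \emph{independent of} $a_i,b_i$, and then enlarge the exponents until $\min\{|s_i|,|t_i|\}>M$. With $s_i=u_i^{a_i}v_i^{a_i}$, $t_i=u_i^{b_i}v_i^{b_i}$ no such $M$ exists, because the lemma imposes no cross-index non-commensurability hypothesis: nothing rules out $u_1=u_2$ and $v_1=v_2$. In that case, if (say) $a_1<a_2$, the reduced word $s_1=u_1^{a_1}v_1^{a_1}$ already occurs as a subword of $s_2^2$ (aligned at the $u$--$v$ transition), giving a piece between $\{s_1,t_1\}^+$ and $\{s_2,t_2\}^+$ of length $|s_1|=\min\{|s_1|,|t_1|\}$, so the required \emph{strict} inequality fails; and if instead $a_1\ge a_2$ the symmetric piece of length $|s_2|$ fails the bullet for $i=2$. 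Enlarging the $a_i,b_i$ grows the offending piece and the target in lockstep, so this cannot be patched by choosing exponents more carefully within your ansatz. The ``finite case analysis on commensurability'' does not resolve this: in the commensurable cases there is no uniform bound to extract.

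The paper's proof avoids this with a two-stage construction your proposal is missing. First, the intermediate words $s_i'=u_i^{n_1^i}v_i^{n_1^i}$, $t_i'=u_i^{n_2^i}v_i^{n_2^i}$ are chosen so that all four of $s_1',t_1',s_2',t_2'$ are \emph{pairwise} non virtually conjugate, across both indices; Lemma~\ref{lem:C-to-1} guarantees each $\langle u_i,v_i\rangle\hookrightarrow F(x,y)$ is finite-to-one on $\Z$-subgroup conjugacy classes, so the exponents can be chosen to avoid finitely many bad coincidences. Second --- the structural step you lack --- one then takes $s_i=(s_i')^N$ and $t_i=(t_i')^N$ for a single large $N$ (a multiple of $\max\{|\bar s_1'|,|\bar t_1'|,|\bar s_2'|,|\bar t_2'|\}$). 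Now $s_i,t_i$ are \emph{periodic} with period $\bar s_i',\bar t_i'$ that is short compared to $|s_i|=N|\bar s_i'|+2|g_i|\ge N$, so any piece of length $\ge N$ forces a common subword of length $\ge 2\max\{|\bar s_1'|,\ldots\}$ between some $(\bar s_1')^*$ or $(\bar t_1')^*$ and some $(\bar s_2')^*$ or $(\bar t_2')^*$, and Lemma~\ref{lem:long overlap} then makes two of $\bar s_1',\bar t_1',\bar s_2',\bar t_2'$ virtually conjugate, a contradiction. Your non-periodic form $u_i^{a_i}v_i^{a_i}$ provides no comparable leverage: a long overlap can live inside a single $u_i$-run and never crosses a period boundary, so Lemma~\ref{lem:long overlap} never applies.
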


\begin{proof}
Since $u_i, v_i$ are non-cancellable,
 the consecutive cancellations between syllables in any word $r\in \{u_i, v_i\}^+$ are separated from each other. For $i=1,2$ set $s_i'=u_i^{n_1^i}v_i^{n_1^i}$ and $t_i'=u_i^{n_2^i}v_i^{n_2^i}$ where $n_1^1, n_1^2, n_2^1, n_2^2$ are chosen so that $s_1', t_1', s_2', t_2'$ are pairwise non virtually conjugate. This can be done by Lemma~\ref{lem:virtually conjugate}. Note that for $i=1,2$ we have $\canc(s_i', t_i') = \canc(t_i', s_i') = \canc(s_i', s_i
') = \canc(t_i', t_i') = \canc(v_i, u_i)$.
Any positive word $r(s_i',t_i')$ in $s_i', t_i'$ has the reduced spelling $g_ir(\bar s_i', \bar t_i')g_i^{-1}$.

Let $N=8\max\{|\bar s_1'|, |\bar t_1'|, |\bar s_2'|, |\bar t_2'|\}$ and set $s_i = (s_i')^N$ and $t_i = (t_i')^N$. Let $w$ be a piece between a word $r_1$ in $\{s_1,t_1\}^+$ and a word $r_2$ in $\{s_2, t_2\}^+$ and suppose that $|w|\geq N$. Note that the length of $w$ is short in comparison to the length of syllables of the form $(\bar s_i')^*, (\bar t_i')^*$, and that the initial or final subwords of the form $g_i, g_i^{-1}$ are shorter than $\frac 1 {16} N$. In particular, $w$ can overlap at most two syllables of the form $(\bar s_i')^*, (\bar t_i')^*$. Thus there exists a subword $w'$ of $w$ of length $\geq \frac 12 N$ that is a subword of $(\bar s_1')^*$ or of $(\bar t_1')^*$. For the same reason, there exists an even shorter subword $w''$ of $w'$ of length $\geq \frac 14 N$ that is also a subword of either $(\bar s_2')^*$ or of $(\bar t_2')^*$. Thus one of $(\bar s_1')^*, (\bar t_1')^*$, say $(\bar s_1')^*$ and one of $(\bar s_2')^*, (\bar t_2')^*$, say $(\bar t_2')^*$ have a common subword of length $\geq 2\max\{|\bar s_1'|, |\bar t_1'|, |\bar s_2'|, |\bar t_2'|\}$. In particular, some cyclic permutation of $(\bar s_1')^*$ is a subword of $(\bar t_2')^*$ (and vice-versa) and by Lemma~\ref{lem:long overlap} $\bar s_1', \bar t_2'$ are virtually conjugate. This is a contradiction. Thus $|w|<N$. We clearly also have $|s_i| = |(s_i')^N| \geq N$, and $|t_i| = |(t_i')^N| \geq N$ for $i=1,2$, and thus we get $|w|<\min\{|s_i|, |t_i|\}$.

\end{proof}

\begin{lem}\label{lem:pieces in single relator}
Let $s,t$ be cyclically reduced elements that are not proper powers in $F(x,y)$ such that $s,t$ are not virtually conjugate. Let $r = s^{\alpha_1}t^{\beta_1}\cdots s^{\alpha_{2p}}t^{\beta_{2p}}$ for some $p\geq 1$. If $\alpha_j, \beta_j$ are all different and greater than $2\max\{|s|,|t|\}+1$, then for every piece $w$ in $r$ we have $|w| \leq \left(\max\{\alpha_j\}+2\right)|s| + \left(\max\{\beta_j\}+2\right)|t|$.
\end{lem}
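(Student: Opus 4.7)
The plan is to argue by contrapositive: suppose for contradiction that $|w|>(\alpha+2)|s|+(\beta+2)|t|$, where $\alpha:=\max_j\alpha_j$ and $\beta:=\max_j\beta_j$. Since $w$ is a piece, it is the common initial subword of two distinct cyclic rotations $\tilde r_1,\tilde r_2$ of $r$ starting at positions $p_1\neq p_2$ in the cyclic word $r$; each starting position induces a parse of $w$ into (possibly partial) $s$- and $t$-syllables inherited from the spelling $r=s^{\alpha_1}t^{\beta_1}\cdots s^{\alpha_{2p}}t^{\beta_{2p}}$.

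The first step is to locate a \emph{framed} complete syllable inside $w$ from the $p_1$-parse---a complete $t^{\beta_i}$ (or, symmetrically, a complete $s^{\alpha_i}$) with at least one letter of each neighbouring $s$-syllable visible in $w$ on either side. A short case analysis on the offset of $p_1$ within its starting syllable shows that the minimum length needed to produce such a framed complete syllable in the $p_1$-parse is $\alpha|s|+\beta|t|+1$, so the hypothesised bound on $|w|$ is amply sufficient; this already determines the integer $\beta_i$ from $w$, and by distinctness of the $\beta_j$ it determines the index $i$.

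The second and main step is to show that this framed subword of $w$ forces $p_1=p_2$. The pure $t$-power of length $\beta_i|t|$ inside $w$ reappears at the corresponding offset when $w$ is viewed starting from $p_2$, as a subword of $r$. Since $\beta_i>2\max\{|s|,|t|\}+1$, this $t$-power is too long to lie inside a single $s$-syllable of $r$ or to overhang an $s$--$t$ syllable boundary of $r$ by as much as $|s|+|t|$ letters on one side: either scenario would produce a common subword of a power of $s$ and a power of $t$ of length at least $|s|+|t|$, and a Fine--Wilf argument based on Lemma~\ref{lem:long overlap} would then force $s$ and $t$ to be powers of the same element, contradicting $s\not\sim t$. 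Hence the $t$-power must align with some $t^{\beta_m}$ of $r$ with overhang strictly less than $|s|+|t|$ on each side; combining this alignment with the $s$-letter framing inherited from the $p_1$-view and with the distinctness and the lower bounds on the $\beta_j$ forces $\beta_m=\beta_i$, and hence $m=i$. But then the $i$-th $t$-syllable of $r$ sits at the same offset inside $w$ whether seen from $p_1$ or from $p_2$, so $p_1=p_2$, a contradiction.

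The delicate point is controlling the possible overhang of the pure $t$-power across an $s$--$t$ boundary of $r$ at position $p_2$: one must rule out overhangs of length $\geq|s|+|t|$ via the long-overlap lemma, and then use the distinctness and size of the $\beta_j$ to collapse the small remaining ambiguity in $\beta_m$. The strengthened hypothesis $\alpha_j,\beta_j>2\max\{|s|,|t|\}+1$, rather than mere distinctness, is precisely what makes Lemma~\ref{lem:long overlap} applicable to rule out the forbidden long overlap.
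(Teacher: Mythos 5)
Your approach is structurally different from the paper's. The paper parses $w$ simultaneously as two subwords of (cyclic permutations of) $r$, introduces the notion of \emph{aligned} overlapping copies of $s$ or $t$, and shows that (i) any non-aligned overlap forces $|w|<6\max\{|s|,|t|\}$, and (ii) in the aligned regime a mismatch between the two parses at consecutive syllables forces the mismatch to be within two syllables of an end of $w$; combined with the distinctness of the exponents this pins $w$ inside a word of the form $t^2s^{\alpha}t^{\beta}s^2$. You instead argue by contradiction via a ``framed complete syllable,'' which is a reasonable idea, but the crucial step is under-justified and as written has a genuine gap.

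The gap is in the sentence ``combining this alignment with the $s$-letter framing inherited from the $p_1$-view and with the distinctness and the lower bounds on the $\beta_j$ forces $\beta_m=\beta_i$, and hence $m=i$.'' The framing you have established consists of only \emph{one} letter on each side of the $t$-power, and a single letter cannot distinguish ``$s$-letter'' from ``$t$-letter'': both $s$ and $t$ are words over the same alphabet $\{x^{\pm1},y^{\pm1}\}$, so the $p_1$-view seeing an $s$-letter and the $p_2$-view seeing a $t$-letter at the same position is not by itself a contradiction. Two concrete cases that your argument as written does not eliminate: (a) in the $p_2$-view the $t$-power lies \emph{strictly inside} some $t^{\beta_m}$ with $\beta_m>\beta_i$ (here there is no $s$-overhang at all and your Fine--Wilf step never fires); and (b) $\beta_m=\beta_i-1$ with an $s$-overhang of length exactly $|t|$ on one side, which when $|t|<|s|+|t|$ passes your overhang threshold. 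Eliminating these requires a framing margin of length at least $|s|+|t|$ on each side of the $t$-power, not one letter. You do in fact have the extra $2|s|+2|t|$ of slack between your threshold $\alpha|s|+\beta|t|+1$ and the hypothesised $|w|>(\alpha+2)|s|+(\beta+2)|t|$, which is exactly the right amount, but you would also need to control how that slack is distributed between the two sides (it could all land on one side if $p_1$ sits at the start of an $s$-syllable). A second, smaller point: invoking ``a Fine--Wilf argument based on Lemma~\ref{lem:long overlap}'' to rule out a common subword of $s^*$ and $t^*$ of length $|s|+|t|$ is correct but requires the full periodicity lemma, since Lemma~\ref{lem:long overlap} applied directly needs length at least $2\max\{|s|,|t|\}\geq|s|+|t|$; this should be spelled out, including the step that passes from a common period $d=\gcd(|s|,|t|)$ to $s$ and $t$ being powers of (conjugates of) the same element via the not-a-proper-power hypothesis.

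In short, the high-level plan (locate a complete syllable in $w$, use distinctness of the exponents plus the long-overlap lemma to force the two parses to coincide) parallels the paper's strategy, but your write-up compresses the hardest part into a single clause and does not handle the case where the $t$-power sits inside a longer $t$-syllable, nor does it secure framing margins large enough to feed the Fine--Wilf step. The paper's case analysis on aligned versus non-aligned syllable pairs is precisely what handles these contingencies.
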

\begin{proof}

Consider two subwords of $r$: $\eta_0\eta_1\cdots \eta_k\eta_{k+1}$ and $\mu_0\mu_1\cdots\mu_{\ell}\mu_{\ell+1}$ where $\eta_i, \mu_j\in\{s,t\}$ such that $\eta_{1}\cdots \eta_{k}$ and $\mu_1\cdots\mu_{\ell}$  are maximal words in syllables $s,t$ entirely contained in $w$, i.e.\ each of these words is equal to $w$ after adding some prefix and suffix that are proper subwords of $s^{\pm}, t^{\pm}$. We say that two syllables $\eta_i$ and $\mu_j$ are \emph{aligned} if $\eta_i =\mu_j$ and they entirely overlap in $w$, i.e. they are the same subword of $w$.

Suppose two syllables $\eta_i,\mu_j $ overlap in $w$ and $\eta_i=\mu_j =s$. If they are not aligned, say a proper suffix of $\eta_i$ equals a proper prefix of $\mu_j$ then $\eta_{i+1}=t$ and $\mu_{j-1}=t$ (since $\mu_{j-1}=s$ or $\eta_{i+1}=s$ would imply that $s$ is equal to its conjugates which is not the case by Lemma~\ref{lem:long overlap} and the assumption that $s,t$ are not proper powers). See Figure~\ref{fig:piece in r}. 
\begin{figure}
\begin{tikzpicture}
\draw (-2.2,0) to (-1.2,0) to (-1.2,.5) to (-2.2,.5) to (-2.2, 0);
 \coordinate [label=$s$] (x) at (-1.7,0);
\draw (-1.1,0) to (-.1,0) to (-.1,.5) to (-1.1,.5) to (-1.1, 0);
 \coordinate [label=$s$] (x) at (-.6,0);
\draw (0,0) to (1,0) to (1,.5) to (0,.5) to (0, 0);
 \coordinate [label=$s$] (x) at (0.5,0);
\draw (1.1,0) to (2.3,0) to (2.3,.5) to (1.1,.5) to (1.1, 0);
 \coordinate [label=$t$] (x) at (1.7,0);
\draw (2.4,0) to (3.6,0) to (3.6,.5) to (2.4,.5) to (2.4, 0);
 \coordinate [label=$t$] (x) at (3,0);
 \draw (3.7,0) to (4.9,0) to (4.9,.5) to (3.7,.5) to (3.7, 0);
 \coordinate [label=$t$] (x) at (4.3,0); 
\draw (.5,-0.2) to (1.5,-0.2) to (1.5,-.7) to (.5,-.7) to (.5, -.2);
 \coordinate [label=$s$] (x) at (1,-.75);
 \draw (1.6,-0.2) to (2.6,-0.2) to (2.6,-.7) to (1.6,-.7) to (1.6, -.2);
 \coordinate [label=$s$] (x) at (2.1,-.75); 
 \draw (2.7,-0.2) to (3.7,-0.2) to (3.7,-.7) to (2.7,-.7) to (2.7, -.2);
 \coordinate [label=$s$] (x) at (3.2,-.75);
 \draw (-.8,-0.2) to (.4,-0.2) to (.4,-.7) to (-.8,-.7) to (-.8, -.2);
 \coordinate [label=$t$] (x) at (-.2,-.75); 
 \draw (-2.1,-0.2) to (-.9,-0.2) to (-.9,-.7) to (-2.1,-.7) to (-2.1, -.2);
 \coordinate [label=$t$] (x) at (-1.5,-.75);
 \draw (-3.4,-0.2) to (-2.2,-0.2) to (-2.2,-.7) to (-3.4,-.7) to (-3.4, -.2);
 \coordinate [label=$t$] (x) at (-2.8,-.75);
 \draw[red, thick] (-1.6,-.1) to (3.1,-.1);
 \end{tikzpicture}
 \caption{The red line is a maximal piece in $r$.}\label{fig:piece in r}
\end{figure}
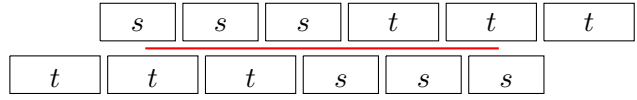
Since $s,t$ are not conjugate by Lemma~\ref{lem:long overlap} we get that $j\geq \ell-1$ and $i\leq 2$. That together with the fact that the prefixes and suffixes of $w$ that are excluded in $\eta_1\cdots \eta_k$ and $\mu_1\cdots\mu_{\ell}$ both have lengths less than $\max\{|s|,|t|\}$ implies that $|w|<6\max\{|s|,|t|\}<6(|s|+|t|)$ in which case we are done. From now on, assume that any two copies of $s$ or $t$ that overlap are aligned. 

Suppose $\eta_i =s$ and $ \mu_j = s$ are aligned where $1\leq i\leq k$ and $1\leq j\leq\ell$. If $\eta_{i+1} = s, 
\mu_{j+1} = t$, then $i+2 \geq k+1$. Indeed, consider three cases:
\begin{itemize}
\item $|s|=|t|$: Then necessarily $i= k$ and $j =\ell$.
\item $|s|<|t|$: If $\eta_{i+2} = s$, then $i+2\geq k+1$ because otherwise $\eta_{i+1}\eta_{i+2} = s^2$ was a subword of $t^*$ (more specifically a subword of $\mu_{j+1}\mu_{j+2} =t^2$). If $\eta_{i+2} = t$ then $\eta_{i+2}$ and $\mu_{j+1}$ are two overlapping not aligned copies of $t$ so $i+2 \geq k+1$.
\item $|s|>|t|$: If $\eta_{i+2} = s$, then $i+2\geq k+1$ because otherwise $\mu_{j+1}\mu_{j+2} = t^2$ was a subword of $s^*$. If $\eta_{i+2} = t$, then $\eta_{i+2}$ and $\mu_{j+2} $ are two overlapping not aligned copies of $t$ so $i+2 \geq k+1$ ($\mu_{j+2}$ overlaps with $\eta_{i+2}$ because otherwise $\mu_{j+1}\mu_{j+2}= t^2$ was a subword of $s^*$).
\end{itemize}

Similarly, if instead $\eta_{i-1} = s, \mu_{j-1} = t$, then $i-2\leq 0$. Similarly we can switch $s$ and $t$.
We are looking for an upper bound of $|w|$. Suppose $w$ contains whole syllable $s^{\alpha_n}$ as a subword for some $n$. There exists $0\leq i\leq k-\alpha_n$ such that $\eta_{i+1} =\dots = \eta_{i+\alpha_n} = s$ for $0\leq i\leq k-\alpha_n$ and $\eta_{i} = t$ and $\eta_{i+\alpha_n+1} = t$. Since $\alpha_n\geq 2 \max\{|s|,|t|\} +1$ there must be a syllable $\mu_j$ contained in the subword spelled by $\eta_{i+1}\cdots\eta_{i+\alpha_n}$, and since $t$ and $s$ are not virtually conjugate $\mu_j = s$. By the previous consideration $\mu_j$ and $\eta_{i'}$ are aligned for some $i+1\leq i'\leq i+\alpha_n$. Since $\alpha_1,\beta_1,\dots,\alpha_{2p},\beta_{2p}$ are all different, we can find $i_0, j_0$ where $i_0\in\{i+1,\dots, i+\alpha_n\}$ such that $\eta_{i_0}=s$ and $\mu_{j_0}=s$ are aligned, and either $\eta_{i_0+1},\mu_{j_0+1}$ or $\eta_{i_0-1},\mu_{j_0-1}$ are different syllables (i.e.\ one of them is $s$ and the other is $t$). By the consideration above, we get that either $i_0+2\geq k+1$, or $i_0-2\leq 0$, which implies that the beginning or the end of the subword $\eta_{i+1}\cdots \eta_{i+\alpha_n} = s^{\alpha_n}$ is less than two syllables away from to the beginning or from the end of $\eta_0\cdots\eta_{k+1}$, respectively. The same happens with a syllable $t^{\beta_n}$ contained in $w$. We conclude that $w$ is always contained in a word of the form $t^2s^{\alpha}t^\beta s^2$ or $s^2t^\beta s^{\alpha}t^2$ for some $\alpha\in\{\alpha_i\}_i$ and $\beta\in\{\beta_j\}_j$, and in particular $|w| \leq \left(\max\{\alpha_i\}+2\right)|s| + \left(\max\{\beta_i\}+2\right)|t|$.

\end{proof}

\begin{proof}[Proof of Proposition~\ref{prop:small cancellation presentation}] First by Lemma~\ref{lem:less than half cancellation} we can assume that for $i=1,\dots,m$ the elements $u_i, v_i$ are non-cancellable. 
Replace the pair $(u_1,v_1)$ and $(u_2, v_2)$  by $(s_1, t_1)$ and $(s_2, t_2)$ respectively as in Lemma~\ref{lem:small cancellation pair}, and continue replacing for each pair of indices $i<j\leq m$. Note that the small cancellation properties of the replaced pairs from Lemma~\ref{lem:small cancellation pair} are preserved when we replace a pair of elements by a pair of its positive subwords. After ${m\choose 2}$ steps we have a collection $\{(s_i, t_i)\}_{i=1}^m$ where for every piece $w$ between a word in $(s_i, t_i)$ and a word in $(s_j, t_j)$ where $i\neq j$ we have $|w|<\max\{|s_i|, |t_i|\}$ and where for any $i$ the elements $s_i$ and $t_i$ are not virtually conjugate. 

Let $r_i(s_i,t_i)=s_i^{\alpha_1^i}t_i^{\beta_1^i}\cdots s_i^{\alpha_{2p}^i}t_i^{\beta_{2p}^i}$ where $\alpha_1^i,\beta_1^i, \dots, \alpha_{2p}^i, \beta_{2p}^i$ are all distinct. Then for each piece $w$ between $r_i$ and $r_j$ where $i\neq j$ we clearly have $|w|<\max\{|s_i|, |t_i|\} <\frac1p |r_i|$. Moreover, if $\min\{\alpha_1^i,\beta_1^i, \dots, \alpha_{2p}^i, \beta_{2p}^i\}>\frac 1 2\max\{\alpha_1^i,\beta_1^i, \dots, \alpha_{2p}^i, \beta_{2p}^i\} +1$ then also for any piece $w$ that lies in $r_i$ in two different ways we also have $|w|<\frac 1 p |r_i|$. Indeed, by Lemma~\ref{lem:small cancellation pair} $r_i$ has the reduced form $gr_i(\bar s_i,\bar t_i)g^{-1}$ where $g\bar s_ig^{-1}, g\bar t_ig^{-1}$ are reduced spellings of $s_i, t_i$ respectively with $\bar s_i,\bar t_i$ cyclically reduced. Let $\bar s_i',\bar t_i'$ be the words that are not proper powers such that $\bar s_i = (\bar s_i')^{n_{s_i}}$ and $\bar t_i = (\bar t_i') ^{n_{t_i}}$, i.e.\ neither $\bar s_i'$ or $\bar t_i'$ is equal to any of its nontrivial cyclic permutations. Also, by Lemma~\ref{lem:small cancellation pair} $(\bar s_i')^{\pm}, (\bar t_i')^{\pm}$ are not conjugate. 

Suppose the piece $w$ is disjoint from $g, g^{-1}$. Then $w$ is a subword of a word in $\bar s_i',\bar t_i'$ and by Lemma~\ref{lem:pieces in single relator} $$\displaystyle |w| \leq \left(\max_j\{n_{s_i}\alpha^i_j\}+2\right)|\bar s_i'| + \left(\max_j\{n_{t_i}\beta^i_j\}+2\right)|\bar t_i'|.$$ It follows that
 
 \begin{align*}
 |w|&\leq (\max_j\{\alpha_j^i,\beta_j^i\}+2)(|\bar s_i|+|\bar t_i|) <2\min_j\{\alpha_j^i,\beta_j^i\}(|\bar s_i|+|\bar t_i|) \\&= \frac 1 {p} \left( 2p\min_j\{\alpha_j^i,\beta_j^i\}(|\bar s_i|+|\bar t_i|) \right)<\frac 1 {p} |r_i|.
 \end{align*}
Finally if $w$ overlaps with the prefix $g$ or suffix $g^{-1}$ then $w$ is a subword of $g\bar s_i^{\alpha_1^i}\bar t_i^{\beta_1^i}$ or $\bar s_i^{\alpha_{2p}^i}\bar t_i^{\beta_{2p}^i}g^{-1}$. If we choose $\alpha_1^i,\beta_1^i, \dots, \alpha_{2p}^i, \beta_{2p}^i$ sufficiently large so $\min_j\{\alpha_j^i,\beta_j^i\}>\frac 1 2\left(\max_j\{\alpha_j^i,\beta_j^i\} +|g|+2\right)$ then we have
 \begin{align*}
|w| \leq |g| + \max_j\{\alpha_j^i,\beta_j^i\}(|\bar s_i|+|\bar t_i|) <2\min_j\{\alpha_j^i, \beta_j^i\} (|\bar s_i|+|\bar t_i|)<\frac 1 p |r_i|.
 \end{align*}
\end{proof}

\section{Proof of Theorem~\ref{thm:main}}\label{sec:proof}
\begin{remark}\label{rem:kar sageev} 
The case $n=2$ of Theorem~\ref{thm:main} can be deduced from the work of Kar and Sageev who study uniform exponential growth of groups acting freely on CAT(0) square complexes \cite{KarSageev16}. They prove that for any two elements $x,y$ there exists a pair of words of length at most $10$ in $x,y$ that freely generates a free semigroup, unless $\langle x,y \rangle$ is virtually abelian. One can construct a small cancellation presentation 
by applying Proposition~\ref{prop:small cancellation presentation} to $\mathcal U = \{(u,v)\mid |u|,|v|\leq 10 \text { and }u,v \text{ are not powers of the same element}\}$. The resulting group cannot act properly on a CAT(0) square complex, since for each pair $u,v$ there is a relator which is a positive word in $u,v$. 
\end{remark}

Let $R_n(x,y)$ be the union of the following pairs for all $k<n$ and $\ell<\ell'\leq K_3$ (where $K_3$ is the constant defined in Lemma~\ref{lem:ramsey}):
\begin{align*}\begin{split}
(x^{n!}, y^{kn!}x^{n!}),\\
(x^{n!}, y^{-kn!}x^{n!}),\\
(x^{-n!}, y^{kn!}x^{-n!}),\\
(x^{-n!}, y^{-kn!}x^{-n!}),\\
\end{split}
\begin{split}
(x^{-n!}y^{-k}x^{n!}y^{k\ell}, x^{-n!}y^{-k}x^{n!}y^{k\ell'}),\\
(y^{-k}x^{-n!}y^{k\ell}x^{n!}, y^{-k}x^{-n!}y^{k\ell'}x^{n!}),\\
(x^{-k}y^{k\ell}, x^{-k}y^{k\ell'}),\\
(x^{-n!}, y^{k\ell} x^{-n!}),\\
(x^{n!}, y^{k\ell}x^{n!}).
\end{split}
\end{align*}
Let $\mathcal R_1(x,y) = R_1(x,y)\cup R_1(y,x)$. Let 
\[
\mathcal R_n(x,y) = R_n(x,y)\cup R_n(y,x)\cup \mathcal R_{n-1}(y^N, x^{-n!}y^Nx^{n!}) \cup \mathcal R_{n-1}(x^N, y^{-n!}x^Ny^{n!})
\]
where $N=n!K_3!$.

\begin{lem}[The Main Lemma]\label{lem:main}
Suppose $\langle x,y\rangle $ acts freely on an $n$-dimensional CAT(0) cube complex. Then one of the following holds:
\begin{itemize}
\item one of the pairs in $R_n(x,y)$ freely generates a free semigroup, or
\item either $y^N$ and $x^{-n!}y^Nx^{n!}$, or $x^N$ and $y^{-n!}x^Ny^{n!}$ stabilize a hyperplane, or
\item the group $\langle x^N,y^N\rangle$ is virtually abelian.
\end{itemize}
\end{lem}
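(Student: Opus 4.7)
The plan is to analyze the action of $y$ on the $x^{n!}$-orbit of a fixed hyperplane $\h \in \sk(x)$, using Lemma~\ref{lem:all or nothing} to dichotomize and Lemma~\ref{lem:ramsey} to handle the ``nothing'' branch. First reduce to the case that both $x$ and $y$ act hyperbolically on $X$; otherwise freeness forces one of them to be trivial and the statement is vacuous. Then assume that no pair in $R_n(x,y)$ freely generates a free semigroup, and derive either the hyperplane-stabilization alternative or virtual abelianness of $\langle x^N, y^N\rangle$.

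Fix $\h \in \sk(x)$. The third conclusion of Lemma~\ref{lem:all or nothing} produces a free semigroup on a pair among the first four families of $R_n(x,y)$, which is ruled out by assumption; hence $y$ either skewers every $\{x^{in!}\h\}_i$ (``all'') or none of them (``nothing''). In the nothing case, apply Lemma~\ref{lem:ramsey} to $y$ acting on each of $\h$ and $x^{n!}\h$: each application yields either an integer $k \le K_3$ with $y^k$ stabilizing the given hyperplane (hence $y^N$ stabilizes it since $K_3! \mid N$) or a pairwise disjoint triple of $y$-translates. If both $\h$ and $x^{n!}\h$ are stabilized by $y^N$, then $y^N$ and $x^{-n!}y^Nx^{n!}$ both stabilize $\h$, giving the first sub-alternative of the second bullet. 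Otherwise, combining a disjoint triple with the halfspace $h$ satisfying $x^{n!}h \subsetneq h$ produces candidate ping-pong triples corresponding to pairs~5--9 of $R_n(x,y)$; the simultaneous failure of all these candidates (forced by our assumption) pins down the $y$- and $x^{n!}$-orbits of $\h$ so tightly that, via Lemma~\ref{lem:convex hull}, the action of $\langle x^N, y^N\rangle$ factors through an action on a convex Euclidean subcomplex, and virtual abelianness then follows from Lemma~\ref{lem:euclidean}.

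In the ``all'' case, perform the symmetric analysis with $x, y$ swapped. If the swapped analysis is again ``all'', the mutual skewering combined with the Euclidean embedding of $\Hull(\gamma_x)$ from Lemma~\ref{lem:convex hull} forces $\langle x^N, y^N\rangle$ to act on a joint Euclidean subcomplex and hence be virtually abelian by Lemma~\ref{lem:euclidean}. If the swapped analysis is ``nothing'', the nothing-case argument applied with $x, y$ swapped yields either the second sub-alternative ($x^N$ and $y^{-n!}x^Ny^{n!}$ stabilize a hyperplane) or virtual abelianness; any candidate free semigroup arising in the swapped analysis would be in $R_n(y,x)$, but the standing ``all'' hypothesis on $\sk(x)$ either promotes it to a pair already in $R_n(x,y)$ or reduces its existence directly to a ping-pong configuration built from pairs in $R_n(x,y)$. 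The main obstacle is the rigidity step in the nothing case: translating the simultaneous non-existence of candidate ping-pong configurations for pairs~5--9 into a global geometric constraint yielding virtual abelianness requires careful bookkeeping of halfspace containments, and it is at this step that the specific shapes of the pairs in $R_n(x,y)$ are essential.
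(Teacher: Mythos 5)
Your case decomposition does not match the paper's and creates a gap it cannot fill. The paper's trichotomy is global: either there is a hyperplane in $\sk(x)\setminus\sk(y)$, or one in $\sk(y)\setminus\sk(x)$, or $\sk(x)=\sk(y)$ as \emph{sets}. Your proposal instead fixes a single $\h\in\sk(x)$ and applies the all-or-nothing lemma to the one orbit $\{x^{in!}\h\}_i$. That is not enough information: if the outcome is ``all,'' you only learn that this particular orbit lies in $\sk(y)$, while $\sk(x)$ decomposes into roughly $n!\,\delta(x)$ such orbits, and nothing prevents other orbits from having the ``nothing'' outcome. Consequently, the ``mutual skewering'' you invoke when the swapped analysis also returns ``all'' is a statement about two single orbits and does not yield $\sk(x)=\sk(y)$, let alone the coincidence of $\Hull(\langle x^N\rangle p)$ and $\Hull(\langle y^N\rangle p)$ that the paper needs before invoking Lemmas~\ref{lem:convex hull} and~\ref{lem:euclidean}. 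The paper also has an intermediate step you omit: even when $\sk(x)=\sk(y)$, one must first rule out hyperplanes separating the two combinatorial minsets that are stabilized by neither $x^{K_3!}$ nor $y^{K_3!}$ (this produces the $(x^{-k}y^{k\ell},x^{-k}y^{k\ell'})$ ping-pong pairs), before concluding the minsets' hulls meet and the group acts on a Euclidean subcomplex.

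The ``nothing'' branch is also misanalyzed. You entertain a third outcome in which neither $\h$ nor $x^{n!}\h$ is $y^N$-stabilized, ping-pong ``fails,'' and you fall back to virtual abelianness via some unstated rigidity argument. But in the paper's treatment of this exact situation the ping-pong triple is \emph{always} constructed: once $y^N$ fails to stabilize, say, $\h$, the Ramsey lemma produces $\ell<\ell'\le K_3$ with $\{\h,y^{k\ell}\h,y^{k\ell'}\h\}$ pairwise disjoint, and the axis placement $\gamma_y\subset h\cap x^{n!}h^*$ (arranged by renaming $x^{in!}\h$) forces $y^{k\ell}h^*\subset h$, $y^{k\ell'}h^*\subset h$; combined with whichever of $y^kx^{n!}\h=x^{n!}\h$ or $y^kx^{n!}\h\ne x^{n!}\h$ holds, this always yields a ping-pong triple for one of pairs 5--9. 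There is no residual case to route to virtual abelianness. Your ``rigidity step'' is therefore not a hard lemma you have yet to prove; it is trying to salvage a branch that doesn't exist under the paper's (correct) bookkeeping. To fix the argument, replace the single-hyperplane dichotomy with the three global cases on $\sk(x)$ versus $\sk(y)$, and in the $\sk(x)\ne\sk(y)$ cases show the conclusion is exactly ``free semigroup or stabilized hyperplane,'' reserving the virtually-abelian conclusion entirely for the case $\sk(x)=\sk(y)$.
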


\begin{proof}
 Without loss of generality we may assume that the action of $\langle x,y \rangle$ is without hyperplane inversions, as we can always subdivide $X$ to have this property of the action. Let $\gamma_x,\gamma_y$ be axes of $x,y$ respectively.

Suppose there exists a hyperplane $\h\in\sk(x)-\sk(y)$. By Lemma~\ref{lem:all or nothing}, $y$ does not skewer $x^{in!}\h$ for any $i\in \Z$ unless one of the pairs in $R_n(x,y)$ freely generates a free semigroup. Without loss of generality (by possibly renaming some $x^{in!}\h$ as $\h$) we can assume that $\gamma_y\subset h\cap x^{n!}h^*$.

If $y^{N}\h =\h $ and $y^{N}x^{n!}\h =x^{n!}\h $ then the subgroup $\langle y^{N}, x^{-n!}y^{N}x^{n!}\rangle$ preserves $\h$. We are now assuming that this is not the case, i.e.\  at least one of $\h$ and $x^{n!}\h$ is not preserved by $y^{N}$.

Suppose that $y^N$ does not stabilize $\h$.  Let $k\leq n$ be minimal such that $y^{k}x^{n!}\h$ and $x^{n!}\h$ are disjoint or equal and let $\ell<\ell'\leq K_3$ such that $\{\h, y^{k\ell}\h, y^{k\ell'}\h\}$ are pairwise disjoint (no two can be equal since $y^N$ does not stabilize $\h$). 
If $y^{k}x^{n!}\h\neq  x^{n!}\h$, then we have $y^{k}x^{n!}h\subset x^{n!}h^*$, and thus also $x^{-n!}y^{k}x^{n!}h\subset h^*$. Since $y^{k\ell}h^*\subset h$ and $y^{k\ell'}h^*\subset h$ there is a ping-pong triple $\{x^{-n!}y^{k}x^{n!}h^*, y^{k\ell}h^*, y^{k\ell'}h^*\}$. See Figure~\ref{fig:case1}.
\begin{figure}
\begin{tikzpicture}
\draw[line width=0.5mm] (0,-2) to (0,2);
\draw[->, line width=0.5mm] (0,0) -- (0.5,0) node [midway, label=below:$h$] {};
\draw[->] (1,-1.8) -- (1, 1.8) node [pos=0.9, label={[label distance=-5pt]left:$\gamma_y$}] {};
\draw[line width=0.5mm] (-2,-2) to (-2,2);
\draw[->, line width=0.5mm] (-2,0) -- (-2.5,0) node [midway, label=left:$x^{-n!}y^{k}x^{n!}h$] {};
\draw[line width=0.5mm] (2.5, 1) to[in=270, out=180] (1.5, 2);
\draw[->, line width=0.5mm] (1.8,1.3) -- (1.5, 1) node [midway, label=below:$y^{k\ell}h$] {}; 
\draw[line width=0.5mm] (2.5, -1) to[in=90, out=180] (1.5, -2);
\draw[->, line width=0.5mm] (1.8,-1.3) -- (1.5, -1) node [midway, label=above:$y^{k\ell'}h $] {};
\end{tikzpicture}
\caption{The case where $y^{N}\h \neq\h $ and $y^{k}x^{n!}\h \neq x^{n!}\h $.}\label{fig:case1}
\end{figure}
Now suppose $y^{k}x^{n!}\h = x^{n!}\h$.
We have
$y^{k\ell}h^*\subset x^{n!}h^*$ because $h^*\subset x^{n!}h^*$, 
and thus $\{x^{n!}h^*, h^*, y^{k\ell}h^*\}$ is a ping-pong triple.
Analogously, if $y^N$ does not stabilize $x^{n!}\h$ 
then one of $\{x^{n!}y^kh,y^{k\ell}x^{n!}h,y^{k\ell'}x^{n!}h\}$ 
and $\{h, x^{n!}h, y^{k\ell}x^{n!}h\}$ is a ping-pong triple for some $k\leq n$ and $\ell<\ell'\leq K_3$.
\begin{com}$h$ or $h^*$?\end{com}

Similarly, if there exists a hyperplane $\h\in\sk(y)-\sk(x)$, 
then one of the pairs in $\mathcal R_n(x,y)$ freely generates a free semigroup 
or $\langle x^N, y^{-n!}x^Ny^{n!}\rangle$ stabilizes a hyperplane. 
Otherwise $\sk(x) = \sk(y)$, which we now assume is the case.

Suppose there exists a hyperplane $\h$ separating $\gamma_x,\gamma_y$ 
that is not stabilized by either $x^{K_3!}$ or $y^{K_3!}$. 
Let $k\leq n$ be minimal such that $x^kh\subset h^*$ for appropriate choice of halfspace $h$ of $\h$. 
Let $\ell, \ell'\leq K_3$ such that $\{\h, y^{k\ell}\h, y^{k\ell'}\h\}$ are pairwise disjoint. 
The triple $\{x^kh^*, y^{k\ell}h^*, y^{k\ell'}h^*\}$ is a ping-pong triple. 

We can now assume that every hyperplane separating any two axes of $x$ and $y$
is stabilized by $x^{K_3!}$ or $y^{K_3!}$. If a hyperplane $\h$ is stabilized by $x^{K_3!}$ then there are axes of $x^{K_3!}$ in both halfspaces  $h, h^*$. In particular, no hyperplane separates $\Min^0(x^{K_3!})$ and $\Min^0(y^{K_3!})$, hence $\Hull(\Min^0(x^{K_3!}))\cap\Hull(\Min^0(y^{K_3!}))\neq \emptyset$. 
Let $p$ be a $0$-cube in the intersection $\Hull(\Min^0(x^{K_3!}))\cap\Hull(\Min^0(y^{K_3!}))$. 
By Lemma~\ref{lem:convex hull}, $p$ lies on axes of both $x^{N}$ and $y^{N}$. The subcomplexes $\Hull(\langle x^N\rangle p)$ and $\Hull(\langle y^N\rangle p)$ both contain $p$ and are dual to the same set of hyperplanes $\sk(x) = \sk(y)$, and therefore are equal. 
That subcomplex is invariant under the action of $x^N$ and $y^N$ and so it is a minimal $\langle x^{N},y^{N}\rangle$-invariant convex subcomplex, and $\langle x^{N},y^{N}\rangle$ acts freely on it.
By Lemma~\ref{lem:convex hull} $\Hull(\gamma)$ embeds in $\E^k$ and by Lemma~\ref{lem:euclidean} the group $\langle x^{N},y^{N}\rangle$ is virtually abelian.
\end{proof}

In the following proof $|w|_*$ denotes the minimal number of syllables of the form $x^{\pm*}, y^{\pm*}$ in a spelling of $w$.
\begin{proof}[Proof of Theorem~\ref{thm:main}]
Let $G$ be a group given by the $C'(1/p')$ presentation from Proposition~\ref{prop:small cancellation presentation} with $\mathcal U = \mathcal R_n(x,y)$ where $p' = \max\{p, 8\cdot 3^n\}$. In particular, $G$ is an infinite, torsion-free, non-elementary hyperbolic group \cite[Thm 4.4]{LS77},\cite{Gromov87}. In particular, any nontrivial virtually abelian subgroup is isomorphic to $\Z$. Since $p'\geq p$ the group $G$ is $C'(1/p)$. Suppose that $G$ acts properly (and hence freely) on an $n$-dimensional CAT(0) cube complex.

By definition of $G$ none of the pairs in $\mathcal R_n(x,y)$ can freely generate a free semigroup since there is a relator in the presentation of $G$ associated to each pair. Since the presentation of $G$ is $C'(1/6)$ it can be concluded from the Greendlinger's Lemma \cite[Thm 4.4]{LS77} that the subgroup $\langle x^N, y^N\rangle$ is not isomorphic to $\Z$  and hence not virtually abelian, so by Lemma~\ref{lem:main} one of the pairs $y^N, x^{-n!}y^Nx^{n!}$ or $x^N, y^{-n!}x^Ny^{n!}$ stabilizes a hyperplane and thus these two elements act on an $(n-1)$-dimensional CAT(0) cube complex. Since $R_{n-1}(y^N,x^{-n!}y^Nx^{n!})\subset  \mathcal R_n(x,y)$ and $R_{n-1}(x^N,y^{-n!}x^Ny^{n!})\subset\mathcal R_n(x,y)$ we can apply Lemma~\ref{lem:main} again and we conclude that either one of $\langle y^N, x^{-n!}y^Nx^{n!}\rangle$ and $\langle x^N, y^{-n!}x^Ny^{n!}\rangle$ is a $\Z$ subgroup, or an appropriate pair of elements stabilizes a hyperplane. We can keep applying Lemma~\ref{lem:main}. As long as the pair of elements $u,v$ stabilizes a hyperplanes, then by Lemma~\ref{lem:main} one of the pairs $v^{N}, u^{-n!}v^{N}u^{n!}$ or $u^{N}, v^{-n!}u^{N}v^{n!}$ generates a $\Z$ subgroup or stabilizes a hyperplane. Since the elements that Lemma~\ref{lem:main} provides are both conjugates of $x$ or $y$, we conclude that $u$ and $v$ at each step are some conjugates of one of the original generators $x$ and $y$. In particular, $|u^k|_* = |u|_*$ and $|v^{k}|_* = |v_*|$ for any $k>0$. Also, \begin{align*}|v^{-n!}u^{N}v^{n!}|_*&\leq |v^{-n!}|_*+|u^{N}|_*+|v^{n!}|_*\\
&= |v|_*+|u|_*+|v|_*\leq 3\max\{|u|_*,|v|_*\},
\end{align*}
and similarly $|u^{-n!}v^{N}u^{n!}|_*\leq3\max\{|u|_*,|v|_*\}$.
By applying Lemma~\ref{lem:main} up to $n$ times, we eventually get a pair of elements $u_0,v_0$ that generates a $\Z$ subgroup and we have $|u_0|_*,|v_0|_*\leq 3^n$. 
In particular, $u_0^k = v_0^{k'}$ for some $k,k'\neq 0$ and we have $|u_0^kv_0^{-k'}|_*\leq 2\cdot 3^n$.
By Greendlinger's Lemma~\cite{LS77} some subword $w$ of $u_0^kv_0^{-k'}$ must be also a subword of some relator $r$ with $|w|\geq\frac 1 2|r|$. On one hand $|w|_*\leq 2\cdot 3^n$. On the other hand, the length of each syllable of the form $x^{\pm*}$ or $y^{\pm*}$ in $r$ is at most $1+\frac 1 {p'}|r|<\frac2{p'}|r|$ because if $x^k$ is a subword of $r$ then $x^{k-1}$ is a piece in $r$ and the same for $y$. Thus for any subword $w'$ of $r$ of length at most $\frac {|r|}2$ we have $|w'|_*>\frac {p'} 4$. Since $\frac {p'}4\geq 2\cdot 3^n$ we get a contradiction.
\end{proof}

\bibliographystyle{alpha}
\bibliography{../../../../kasia}

\def\cprime{$'$} \def\polhk#1{\setbox0=\hbox{#1}{\ooalign{\hidewidth
  \lower1.5ex\hbox{`}\hidewidth\crcr\unhbox0}}} \def\cprime{$'$}
  \def\cprime{$'$} \def\polhk#1{\setbox0=\hbox{#1}{\ooalign{\hidewidth
  \lower1.5ex\hbox{`}\hidewidth\crcr\unhbox0}}}
\begin{thebibliography}{GRS80}

\bibitem[BC02]{BradyCrisp02}
Noel Brady and John Crisp.
\newblock Two-dimensional {A}rtin groups with {${\rm CAT}(0)$} dimension three.
\newblock In {\em Proceedings of the {C}onference on {G}eometric and
  {C}ombinatorial {G}roup {T}heory, {P}art {I} ({H}aifa, 2000)}, volume~94,
  pages 185--214, 2002.

\bibitem[BH99]{BridsonHaefliger}
Martin~R. Bridson and Andr{\'e} Haefliger.
\newblock {\em Metric spaces of non-positive curvature}.
\newblock Springer-Verlag, Berlin, 1999.

\bibitem[Bri01]{Bridson01}
Martin~R. Bridson.
\newblock Length functions, curvature and the dimension of discrete groups.
\newblock {\em Math. Res. Lett.}, 8(4):557--567, 2001.

\bibitem[Bro16]{Brown16}
Samuel Brown.
\newblock {$\rm{CAT(-1)}$} metrics on small cancellation groups.
\newblock {\em arXiv:1607.02580}, pages 1--12, 2016.

\bibitem[CH13]{ChepoiHagen13}
Victor Chepoi and Mark~F. Hagen.
\newblock On embeddings of {CAT}(0) cube complexes into products of trees via
  colouring their hyperplanes.
\newblock {\em J. Combin. Theory Ser. B}, 103(4):428--467, 2013.

\bibitem[Cri02]{Crisp02}
John Crisp.
\newblock On the {${\rm CAT}(0)$} dimension of 2-dimensional {B}estvina-{B}rady
  groups.
\newblock {\em Algebr. Geom. Topol.}, 2:921--936, 2002.

\bibitem[CS11]{CapraceSageev2011}
Pierre-Emmanuel Caprace and Michah Sageev.
\newblock Rank rigidity for {CAT(0)} cube complexes.
\newblock {\em Geometric And Functional Analysis}, 21:851--891, 2011.
\newblock 10.1007/s00039-011-0126-7.

\bibitem[FFT19]{FernosForesterTao19}
Talia Fern\'{o}s, Max Forester, and Jing Tao.
\newblock Effective quasimorphisms on right-angled {A}rtin groups.
\newblock {\em Ann. Inst. Fourier (Grenoble)}, 69(4):1575--1626, 2019.

\bibitem[Gen19]{Genevois19}
Anthony Genevois.
\newblock Rank-one isometries of $\mathrm{CAT(0)}$ cube complexes and their
  centralisers.
\newblock {\em arXiv:1905.00735}, pages 1--20, 2019.

\bibitem[Gro87]{Gromov87}
M.~Gromov.
\newblock Hyperbolic groups.
\newblock In {\em Essays in group theory}, volume~8 of {\em Math. Sci. Res.
  Inst. Publ.}, pages 75--263. Springer, New York, 1987.

\bibitem[Gro01]{Gromov2004}
M.~Gromov.
\newblock {${\rm CAT}(\kappa)$}-spaces: construction and concentration.
\newblock {\em Zap. Nauchn. Sem. S.-Peterburg. Otdel. Mat. Inst. Steklov.
  (POMI)}, 280(Geom. i Topol. 7):100--140, 299--300, 2001.

\bibitem[GRS80]{GrahamRothschildSpencer80}
Ronald~L. Graham, Bruce~L. Rothschild, and Joel~H. Spencer.
\newblock {\em Ramsey theory}.
\newblock John Wiley \& Sons, Inc., New York, 1980.
\newblock Wiley-Interscience Series in Discrete Mathematics, A
  Wiley-Interscience Publication.

\bibitem[Hae17]{HaettelArtin}
Thomas Haettel.
\newblock Virtually cocompactly cubulated {A}rtin-{T}its groups.
\newblock {\em arXiv:1509.08711}, pages 1--25, 2017.

\bibitem[Hag07]{HaglundSemiSimple}
Fr{\'e}d{\'e}ric Haglund.
\newblock Isometries of {C}{A}{T}(0) cube complexes are semi-simple.
\newblock pages 1--17, 2007.
\newblock Preprint.

\bibitem[HJP16]{HuangJankiewiczPrzytycki16}
Jingyin Huang, Kasia Jankiewicz, and Piotr Przytycki.
\newblock Cocompactly cubulated 2-dimensional {A}rtin groups.
\newblock {\em Comment. Math. Helv.}, 91(3):519--542, 2016.

\bibitem[JW17]{JankiewiczWise17}
Kasia Jankiewicz and Daniel~T. Wise.
\newblock Cubulating small cancellation free products.
\newblock pages 1--11, 2017.
\newblock Preprint.

\bibitem[KK13]{KotowskiKotowski13}
Marcin Kotowski and Micha\l Kotowski.
\newblock Random groups and property {$(T)$}: \.{Z}uk's theorem revisited.
\newblock {\em J. Lond. Math. Soc. (2)}, 88(2):396--416, 2013.

\bibitem[KS16]{KarSageev16}
Aditi Kar and Michah Sageev.
\newblock Uniform exponential growth for {CAT(0)} square complexes.
\newblock {\em arXiv:1607.00052v2}, pages 1--15, 2016.

\bibitem[LS77]{LS77}
Roger~C. Lyndon and Paul~E. Schupp.
\newblock {\em Combinatorial group theory}.
\newblock Springer-Verlag, Berlin, 1977.
\newblock Ergebnisse der Mathematik und ihrer Grenz\-gebiete, Band 89.

\bibitem[Mar17]{Martin17}
Alexandre Martin.
\newblock Complexes of groups and geometric small cancellation over graphs of
  groups.
\newblock {\em Bull. Soc. Math. France}, 145:193--223, 2017.

\bibitem[Pri83]{Pride83}
Stephen~J. Pride.
\newblock Some finitely presented groups of cohomological dimension two with
  property ({FA}).
\newblock {\em J. Pure Appl. Algebra}, 29(2):167--168, 1983.

\bibitem[Sag95]{Sageev95}
Michah Sageev.
\newblock Ends of group pairs and non-positively curved cube complexes.
\newblock {\em Proc. London Math. Soc. (3)}, 71(3):585--617, 1995.

\bibitem[Sag14]{Sageev14}
Michah Sageev.
\newblock {$\rm CAT(0)$} cube complexes and groups.
\newblock In {\em Geometric group theory}, volume~21 of {\em IAS/Park City
  Math. Ser.}, pages 7--54. Amer. Math. Soc., Providence, RI, 2014.

\bibitem[Wis04]{WiseSmallCanCube04}
Daniel~T. Wise.
\newblock Cubulating small cancellation groups.
\newblock {\em GAFA, Geom. Funct. Anal.}, 14(1):150--214, 2004.

\bibitem[Woo16]{Woodhouse16}
Daniel Woodhouse.
\newblock A generalized axis theorem for cube complexes.
\newblock {\em arXiv:1602.01952}, pages 1--14, 2016.

\bibitem[{\.Z}uk03]{Zuk03}
A.~{\.Z}uk.
\newblock Property ({T}) and {K}azhdan constants for discrete groups.
\newblock {\em Geom. Funct. Anal.}, 13(3):643--670, 2003.

\end{thebibliography}
\end{document}